\def\namedlabel#1#2{\begingroup
    #2%
    \def\@currentlabel{#2}%
    \phantomsection\label{#1}\endgroup
}
\newtheorem{theorem}{Theorem}[section]
\newtheorem{lemma}[theorem]{Lemma}
\newtheorem{definition}[theorem]{Definition}
\newtheorem{example}[theorem]{Example}
\newtheorem{remark}[theorem]{Remark}
\newtheorem{corollary}[theorem]{Corollary}
\newtheorem{conjecture}[theorem]{Conjecture}
\newtheorem*{conjecture*}{Alternation Conjecture}
\newtheorem*{conj*}{Generalized Kauffman-Harary Conjecture}
\newtheorem*{conje*}{Alternate Forms of the Generalized Kauffman-Harary Conjecture}
\title{The Generalized Kauffman-Harary Conjecture is True}
\author{Rhea Palak Bakshi}
\address{ETH Institute for Theoretical Studies, Zürich, Switzerland}
\email{{\rm rheapalak.bakshi@eth-its.ethz.ch $|$ rheapalakbakshi@gmail.com}}
\author{Huizheng Guo}
\address{Department of Mathematics, The George Washington University, Washington DC, USA}
\email{{\rm hguo30@gwu.edu}}
\author{Gabriel Montoya-Vega}
\address{Queens College and The Graduate Center, City University of New York, NY, USA}
\email{{\rm gabrielmontoyavega@gmail.com $|$ gmontoya-vega@gc.cuny.edu}}
\author{Sujoy Mukherjee}
\address{Department of Mathematics, University of Denver, CO, USA}
\email{{\rm sujoymukherjee.math@gmail.com $|$ sujoy.mukherjee@du.edu}}
\author{J\'{o}zef H. Przytycki}
\address{Department of Mathematics, The George Washington University, Washington DC, USA and \linebreak Department of Mathematics, University of Gda\'{n}sk, Gda\'{n}sk, Poland}
\email{{\rm przytyck@gwu.edu}}
\subjclass[2020]{Primary: 57K10 Secondary: 57M12}
\keywords{Determinants of links, double branched cover, Fox colorings, Kauffman-Harary conjecture, knots and links, pseudo colorings.}
\begin{document}

\begin{abstract}

For a reduced alternating diagram of a knot with a prime determinant $p,$ the Kauffman-Harary conjecture states that every non-trivial Fox $p$-coloring of the knot assigns different colors to its arcs. In this paper, we prove a generalization of the conjecture stated nineteen years ago by Asaeda, Przytycki, and Sikora: for every pair of distinct arcs in the reduced alternating diagram of a prime link with determinant $\delta,$ there exists a Fox $\delta$-coloring that distinguishes them.

\end{abstract}
\maketitle

\tableofcontents

\section{History of the alternation conjecture}

In 1998, Louis H. Kauffman and Frank Harary formulated the following conjecture \cite{HK}:

\begin{conjecture*}

    Let $D$ be a reduced, alternating diagram of a knot $K$ having determinant $p$, where $p$ is prime. Then every non-trivial $p$-coloring of $D$ assigns different colors to different arcs.
    
\end{conjecture*}

This conjecture is now known as the Kauffman-Harary conjecture. It was proved for rational knots \cite{KLa, PDDGS}, Montesinos knots \cite{APS}, some Turk's head knots \cite{DMMS}, and for algebraic knots \cite{DS}. In 2009, Thomas W. Mattman and Pablo Solis proved this conjecture using the notion of pseudo colorings. A generalization of this conjecture, known as the generalized Kauffman-Harary (GKH) conjecture, was formulated by Marta M. Asaeda, Adam S. Sikora, and the fifth author in 2004 \cite{APS}. They proved this conjecture for Montesinos links in the same paper. In this paper, we prove it in full generality.

\

The paper is structured as follows. In the next section we introduce the GKH conjecture and we prove it in Section \ref{proofconjecture}. In Section \ref{nonprimealt}, we reformulate and prove the conjecture for non-prime alternating links.  We illustrate the results with some examples in Section \ref{exfoxsection}. In the last  section, we discuss pseudo colorings followed by some open questions.

\section{Preliminaries}

 In this section, we state the original and alternate versions of the GKH conjecture. The difference between the original and generalized versions of the conjecture is that the former is about links with prime determinant, while the generalized version is about links with determinant not necessarily prime. It is important to note that the only link whose determinant is prime is the Hopf link. 

\begin{conj*}

    If $D$ is a reduced alternating diagram of a prime link $L$, then different arcs of $D$ represent different elements of $H_1(M^{(2)}_L,\mathbb Z)$, where $M^{(2)}_L$ denotes the double branched cover of $S^3$ branched along $L$. 
    
\end{conj*}

The GKH conjecture was formulated in \cite{APS} using the homology of the double branched cover of $S^3$ branched along $L$. In this paper we use a diagrammatic version of this conjecture by using the universal\footnote{Analogous to the fundamental group and the fundamental quandle, this group is often called the fundamental group of Fox colorings.} group of Fox colorings $Col(D)$ for a prime link $L$ with diagram $D$. 

\begin{definition}

    The group  $\boldsymbol{Col(D)}$ is the abelian group whose generators are indexed by the arcs of $D$, denoted by $arcs(D)$, and whose relations are 
    $2b-a-c=0$ given by the crossings of $D$. More precisely, $$Col(D) = \displaystyle \bigg\{ \text {arcs}(D) \ | \ \ \vcenter {\hbox{
\begin{overpic}[scale = .08]{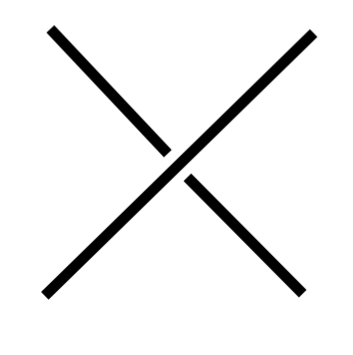}
\put(26, 26.5){\tiny{$b$}}
\put(0, -1){\tiny{$b$}}
\put(14, -1){\tiny{$c=2b-a$}}
\put(0, 26.5){\tiny{$a$}}
\end{overpic} }} \ \ \ \ \ \ \bigg\}.$$

\end{definition}

It is known that $Col(D)=  \mathbb Z \oplus H_1(M^{(2)}_L,\mathbb Z)$ (see, for example, \cite{Prz1}).

\begin{definition}\label{coltrivialdefi}

    Let $Col^{trivial}(D) \cong \mathbb Z$ be the group of trivial colorings of $D$. This group is embedded in $Col(D)$ and the quotient group $\displaystyle \frac{Col(D)}{Col^{trivial}(D)}$ is called the \textbf{reduced group of Fox colorings}. We denote it by $Col^{red}(D)$. 
    
\end{definition}

Notice that, for a diagram $D$ of a link $L$, $Col^{red}(D)=H_1\big(M^{(2)}_L,\mathbb Z\big)$ and for non-split alternating links, this group is finite with non-zero determinant.

\

The first two statements of the following conjecture are equivalent to the original GKH conjecture, while part $(c)$ offers an extension.

\begin{conjecture}[Alternate forms of the generalized Kauffman-Harary conjecture]\label{ConAPS} 

\ 

 Let $D$ be a reduced alternating diagram of an alternating prime link and let $\delta(D)$ denote the absolute value of its determinant.

\begin{enumerate} 

\item [\namedlabel{a}{(a)}] Let $\mathbb{Z}^{|arcs|}$ denote the free abelian group $\mathbb{Z}^{|arcs|} = \{arcs(D) \mid \emptyset \}$. Consider the map
$\mathbb{Z}^{|arcs|} \xrightarrow{\beta} Col(D).$
Then $\beta$ is injective on the arcs of $D$, that is, $\beta(a_i) \neq \beta(a_j)$ for $i \neq j$.
    
\item [\namedlabel{b}{(b)}]  The diagram $D$ has $t$ Fox $\delta(D)$-colorings $y_1, y_2, \hdots, y_t$, such that for every pair of distinct arcs $a_i, a_j$, there exists $y_k$ such that $y_k(a_i) \neq y_k(a_j)$. 

\item [\namedlabel{c}{(c)}] If $Col^{red}(D) = \mathbb Z_{n_1} \oplus \mathbb Z_{n_2} \oplus \cdots \oplus \mathbb Z_{n_s}$ with $n_{i+1} | n_i$, then there are $s$ Fox $n_1$-colorings that distinguish all the arcs of $D$. Note that, $s$ is strictly less than the number of crossings of $D$.
    
\end{enumerate}
\end{conjecture}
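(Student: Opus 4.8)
The plan is to reduce all of \ref{a}, \ref{b}, and \ref{c} to the single assertion
\[
(\star)\qquad a_i \neq a_j \ \text{in}\ Col(D)\ \text{for every pair of distinct arcs}\ a_i,a_j,
\]
and to treat the passage to explicit Fox colorings as formal algebra. First I would introduce the augmentation $\varepsilon\colon Col(D)\to\mathbb Z$ determined by $\varepsilon(a)=1$ on each arc $a$; it is well defined because every crossing relation $2b-a-c$ has coefficient sum $0$, and under the splitting $Col(D)=\mathbb Z\oplus H_1(M^{(2)}_L,\mathbb Z)$ it is the projection onto the free summand, with $\ker\varepsilon$ the torsion summand and $Col^{red}(D)=Col(D)/Col^{trivial}(D)\xrightarrow{\ \sim\ }\ker\varepsilon$. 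Since $\varepsilon(a_i-a_j)=0$, any collision already lies in the torsion, so $(\star)$ is equivalent to the assertion that distinct arcs are distinct in $Col^{red}(D)=H_1(M^{(2)}_L,\mathbb Z)$. This is precisely the original GKH conjecture, namely statement \ref{a}; the reverse implication is immediate, which yields the equivalence of \ref{a} and \ref{b} promised in the text.

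Granting $(\star)$, parts \ref{c} and \ref{b} are pure finite-abelian-group theory, since a Fox $n$-coloring is the same datum as a homomorphism $Col(D)\to\mathbb Z_n$, and such a coloring separates $a_i$ from $a_j$ exactly when it does not annihilate $a_i-a_j$. For \ref{c}, write $Col^{red}(D)=\mathbb Z_{n_1}\oplus\cdots\oplus\mathbb Z_{n_s}$ with $n_{i+1}\mid n_i$; the $s$ coordinate projections, each post-composed with the injection $\mathbb Z_{n_i}\hookrightarrow\mathbb Z_{n_1}$ sending $1\mapsto n_1/n_i$, pull back to $s$ Fox $n_1$-colorings of $D$ whose common kernel on $Col^{red}(D)$ is trivial; by $(\star)$ these $s$ colorings therefore distinguish all arcs. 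The bound that $s$ is strictly less than the number of crossings comes from the Goeritz presentation, in which $H_1(M^{(2)}_L,\mathbb Z)$ is the cokernel of a reduced Laplacian of size one less than the number of regions of one checkerboard color; an Euler-characteristic count—valid because reducedness makes the Tait graph loopless and bridgeless, so each color has at least two regions—shows this is strictly fewer than the number of edges, i.e. crossings. Finally \ref{b} follows from \ref{c} by composing the same $s$ colorings with $\mathbb Z_{n_1}\hookrightarrow\mathbb Z_{\delta(D)}$, legitimate because $n_1\mid\delta(D)$; this produces the separating family $y_1,\dots,y_t$ with $t=s$.

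All the difficulty is thus concentrated in $(\star)$, and here I would work on the checkerboard (Tait) graph $\Gamma$ of $D$. Using the Goeritz matrix one identifies $Col^{red}(D)$ with the critical (sandpile) group $K(\Gamma)=\operatorname{coker}(\text{reduced Laplacian})$, whose order is the number of spanning trees of $\Gamma$, namely $\delta(D)$. Under this identification each arc determines an explicit divisor on $\Gamma$, and $a_i-a_j$ corresponds to an explicit degree-zero divisor $D_{ij}$; the goal is to show that $D_{ij}$ is non-principal, i.e. nonzero in $K(\Gamma)$. The key structural inputs are that $D$ reduced makes $\Gamma$ bridgeless, that $L$ prime makes $\Gamma$ non-separable with no $2$-cut exhibiting a connected sum, and that $D$ alternating makes the associated Goeritz form definite; definiteness is what pins down the integer solutions of the chip-firing equation $\Delta f=D_{ij}$ and forbids accidental principal divisors.

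The hard part will be converting this into a clean combinatorial contradiction. Concretely, a collision $D_{ij}=0$ yields an integer function $f$ on the vertices of $\Gamma$ with $\Delta f=D_{ij}$, and I expect the level sets of $f$ to produce a small edge- or vertex-cut of $\Gamma$ separating the crossings flanking $a_i$ from those flanking $a_j$. Reducedness rules out a single separating edge, while primeness rules out a $2$-cut, since such a cut would be a Conway sphere presenting $L$ as a nontrivial connected sum; definiteness of the Goeritz form is used to show that no larger cancellation can occur. Making the location and size of this cut precise from $f$, and disposing of the small exceptional Tait graphs—cycles and their planar duals, where $Col^{red}(D)$ is cyclic and $(\star)$ can be checked directly—are the two crux points of the argument.
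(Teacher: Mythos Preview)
Your reduction of all three parts to the single assertion $(\star)$ is correct, and your derivation of \ref{b} and \ref{c} from $(\star)$ via coordinate projections on the invariant factor decomposition matches the paper's treatment (Theorems~\ref{generatorlemma} and~\ref{mainlemma2}). The gap is that you have not actually proved $(\star)$. Your sandpile/critical-group outline is explicitly unfinished: you write ``I expect the level sets of $f$ to produce a small edge- or vertex-cut'' and list ``making the location and size of this cut precise'' and ``disposing of the small exceptional Tait graphs'' as unresolved crux points. Definiteness of the Goeritz form does not by itself bound the size of the cut coming from an integer solution of $\Delta f = D_{ij}$, and without that bound the primeness/reducedness hypotheses never engage. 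As written this is a plan, not a proof.

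The paper's argument for $(\star)$ (Theorem~\ref{mainlemma}) is completely different and bypasses the chip-firing picture. It works with the coloring matrix $L_{n_1}(D) = n_1\,C^{-1}(D)$, whose columns modulo $n_1$ are Fox $n_1$-colorings, and exploits the Mirror Lemma (Lemma~\ref{Mirror}), valid only for alternating diagrams, that $C'(\overline{D}) = C'(D)^{T}$. If no column of $L_{n_1}(D)$ distinguishes $a_i$ from the base arc, then the $i$th row of $L_{n_1}(D)$ vanishes modulo $n_1$; transposing, the $i$th column of $L_{n_1}(\overline{D})$ vanishes modulo $n_1$, which is a pseudo coloring of $\overline{D}$ in the sense of Mattman--Solis. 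Their theorem that reduced prime alternating diagrams admit no pseudo colorings then gives the contradiction. Thus primeness, reducedness, and alternation all enter through a single cited result rather than through an ad hoc cut analysis; if you want to pursue your Tait-graph route, you would essentially be reproving the Mattman--Solis obstruction in sandpile language.
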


\begin{remark}

    Parts \ref{a} and \ref{b} of Conjecture \ref{ConAPS} are equivalent to each other, since for a finite group $G$, we have $G \cong Hom(G, \mathbb{Z}_{n_1})\cong Hom(G, \mathbb{Z}_{\delta(D)})$, where $G=\mathbb{Z}_{n_1}\oplus \mathbb Z_{n_2} \oplus \cdots \oplus \mathbb{Z}_{n_s}$, with $n_{i+1} | n_i$ and $\delta(D) =n_{1}n_{2}\cdots n_{s}$. In particular, $Hom(Col^{red}(D), \mathbb{Z}_{\delta(D)}) \cong Hom(Col^{red}(D), \mathbb{Z}_{n_1}) \cong Col^{red}(D)$. Thus, we can work with a group or its dual. To distinguish elements in the group we often analyze its homomorphisms  (dual elements) into the given ring. See \cite{lang}, for example.
    
\end{remark}

\section{Proof of the generalized Kauffman-Harary conjecture}\label{proofconjecture}

The proof of the GKH conjecture is organized as follows. First, we define the crossing matrix $C'(D)$ and coloring matrix $L(D)$ of a link diagram $D$. Following \cite{MS} we prove that every column of the coloring matrix represents a non-trivial Fox $\delta(D)$-coloring. Then using the fact that the coloring matrix of the mirror image of $D$ is the transpose of $L$, we prove part \ref{b}, and equivalently, part \ref{a} of Conjecture \ref{ConAPS}. Additionally, we show that the columns of the coloring matrix generate the group $Col^{red}(D)$ and use this fact to prove part \ref{c} of Conjecture \ref{ConAPS}. 

\begin{definition}

	A \textbf{Fox} $\boldsymbol{k}$\textbf{-coloring} of a diagram $D$ is a function $f: \mathit{arcs}(D) \to \mathbb{Z}_{k}$, satisfying the property that every arc is colored by an element of $\mathbb{Z}_{k}=\left\lbrace 0, 1, 2, 3, \dots, k-1\right\rbrace $ in such a way that at each crossing the sum of the colors of the undercrossings is equal to twice the color of the overcrossing modulo $k$. That is, if at a crossing $v$ the overcrossing is colored by $b$, and the undercrossings are colored by $a$ and $c$, then  $2b-a-c \equiv0$ modulo $k$. See Figure \ref{CrossingMatrixRelation} for an illustration. The group of Fox $k$-colorings of a diagram $D$ is denoted by $\mathit{Col}_{k}(D)$ and the number of Fox $k$-colorings is denoted by $\mathit{col}_{k}(D)$. Analogous to Definition \ref{coltrivialdefi}, we divide the group $Col_{k}(D)$ by the group of trivial colorings and denote the quotient group by $Col^{red}_{k}(D)$.
 
\end{definition}

The matrix describing the space of colorings $Col(D)$ is referred to, by Mattman and Solis, as the crossing matrix for a fixed arbitrary ordering of the crossings \cite{MS}. Here we do not assume that the diagram is alternating.

\begin{definition}

Fix an ordering of the crossings of a reduced link diagram $D$. Then the set of arcs inherits the order of the set of crossings. In this way, the over-arc has the same index as the crossing. The \textbf{crossing matrix}\footnote{The alternative, more descriptive, name could be {\it unreduced fundamental Fox colorings matrix.}} of  $D$, denoted by $C'(D)$, is an $n \times n$ matrix such that each row corresponds to a crossing  that gives the relation $2b-a-c=0$ (see Figure \ref{CrossingMatrixRelation}). The entries of the matrix are defined as follows\footnote{It is possible that two under-arcs at a crossing are not distinct. Then the relation $2b-a-c=0$ becomes $2b-2a=0$. For instance, this may occur for the Hopf link.}:

\begin{minipage}{0.5\textwidth}
$$ C_{ij}' = \left\{ \begin{array}{rr}
2 & \text{if  } a_i \   
 \text{is the over-arc at } c_i, \\
-1 &  \text{ if } a_j \text{ is an under-arc at } c_i \text{ }(i\neq j), \text{and} \\
0 & \text{ otherwise}.
\end{array}
\right.$$
\end{minipage}
\begin{minipage}{0.47\textwidth}
\centering
\includegraphics[scale=0.45]{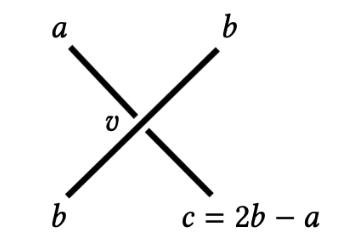}
\captionof{figure}{Fox coloring relation at crossing $v$.}
\label{CrossingMatrixRelation}
\end{minipage}

\end{definition}

\begin{figure}[ht]
\centering
$$\vcenter{\hbox{
\begin{overpic}[scale = .44]{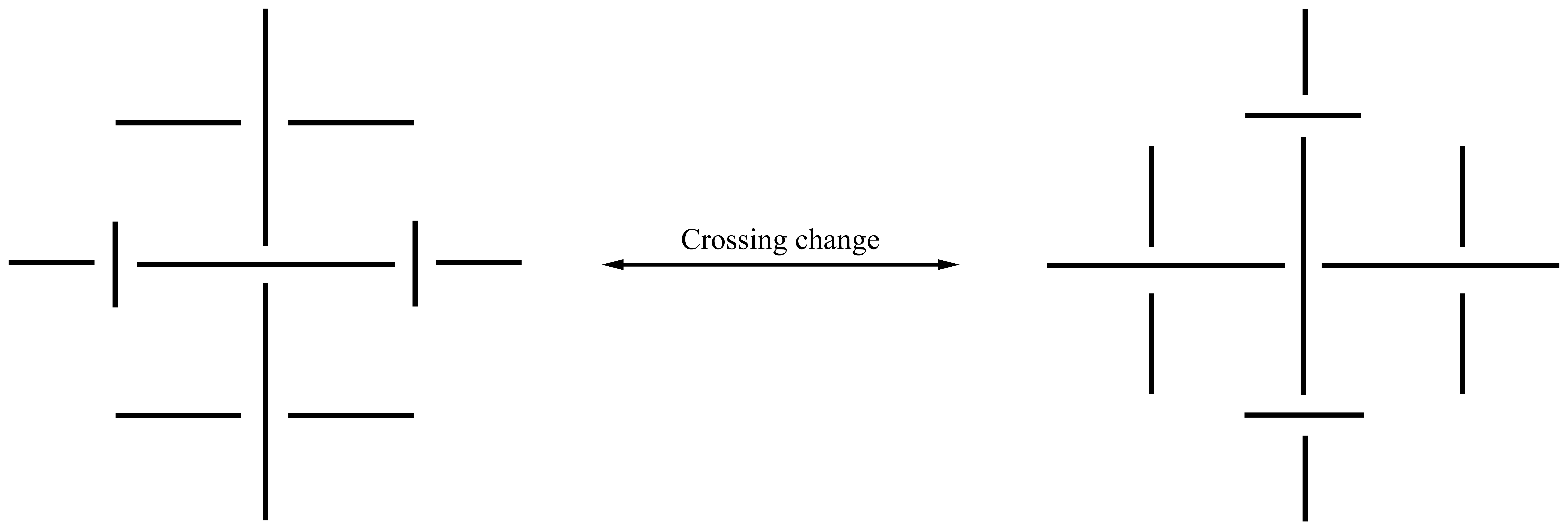}
\put(80, 125){$c_k$}
\put(80, 83){$c_i$}
\put(80, 40){$c_j$}
\put(92, 70){$a_i$}
\put(65, 135){$a_k$}
\put(65, 50){$a_j$}
\put(385, 125){$c_k$}
\put(360, 125) {$\overline{a_k}$}
\put(385, 83){$c_i$}
\put(369, 95){$\overline{a_i}$}
\put(385, 40){$c_j$}
\put(360, 40){$\overline{a_j}$}

\end{overpic} }}$$
	\caption{Neighborhood of the crossing $c_i$ in $D$ (on the left) and $\overline{D}$.}
	\label{mirrorcrossing}
\end{figure}

The following lemma holds only for alternating links and plays an important role in the proof of the GKH conjecture. 

\begin{lemma}\label{Mirror}

Let $D$ be a reduced alternating link diagram with crossing matrix $C'(D)$ and let $\overline{D}$ be its mirror image. Then the matrix $C'^{T}$ is a crossing matrix for $\overline{D}$.

\end{lemma}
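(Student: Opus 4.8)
The plan is to prove the sharper, pointwise statement that $C'(\overline{D})_{ij} = C'(D)_{ji}$ for all $i,j$, once the arcs of $\overline{D}$ are indexed compatibly with those of $D$ (same ordering of crossings, over-arc sharing the index of its crossing); this immediately yields that $C'(D)^{T}$ is the crossing matrix $C'(\overline{D})$ of $\overline{D}$. Before the entry-by-entry comparison I would isolate the combinatorial consequence of the alternating hypothesis that makes this indexing legitimate: in a reduced alternating diagram every arc passes over exactly one crossing. Indeed, travelling along an arc one leaves an undercrossing, and by alternation the next crossing encountered is an overcrossing, while the one after that is again an undercrossing, where the arc terminates. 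Hence the assignment sending an arc to the unique crossing it passes over is a bijection between $arcs(D)$ and the crossings of $D$, and likewise for $\overline{D}$, since the mirror image of an alternating diagram is again alternating. This is precisely the point at which the alternating hypothesis is used; it justifies writing $a_i$ (resp. $\overline{a_i}$) for the over-arc at $c_i$ in $D$ (resp. $\overline{D}$) and guarantees $C'(D)_{ii} = C'(\overline{D})_{ii} = 2$.

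With the bijection in hand I would read off the two patterns to be matched. Fix a crossing $c_i$, and let $N_1$ and $N_2$ denote the two crossings at which the over-strand of $D$ through $c_i$, followed in its two directions, first returns underneath; equivalently, $N_1$ and $N_2$ are the two crossings at whose undercrossings the arc $a_i$ terminates. Column $i$ of $C'(D)$ then carries $2$ in row $i$ and $-1$ precisely in rows $N_1$ and $N_2$, since $C'(D)_{ji} = -1$ exactly when $a_i$ is an under-arc at $c_j$, that is, when $c_j \in \{N_1, N_2\}$. The heart of the argument is to show that row $i$ of $C'(\overline{D})$ has the same support $\{i, N_1, N_2\}$ with the same values.

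To see this I would track the under-strand of $\overline{D}$ at $c_i$, which is literally the over-strand of $D$ at $c_i$ with its crossing data reversed. Listing the crossings along this strand and recording over/under in $D$, alternation produces the local pattern $\dots, N_2(\mathrm{under}), c_i(\mathrm{over}), N_1(\mathrm{under}), \dots$; passing to $\overline{D}$ reverses every symbol, so the same strand reads $\dots, N_2(\mathrm{over}), c_i(\mathrm{under}), N_1(\mathrm{over}), \dots$. Consequently, in $\overline{D}$ the two under-arcs meeting at $c_i$ are exactly the arcs passing over $N_1$ and over $N_2$, namely $\overline{a_{N_1}}$ and $\overline{a_{N_2}}$. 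Thus row $i$ of $C'(\overline{D})$ carries $2$ in column $i$ and $-1$ in columns $N_1$ and $N_2$, matching column $i$ of $C'(D)$, which is the desired identity $C'(\overline{D}) = C'(D)^{T}$.

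The step I expect to be the main obstacle is the careful bookkeeping of the previous paragraph: one must keep straight which diagram the over/under data refers to, and verify that the two under-arcs of $\overline{D}$ at $c_i$ are genuinely the over-arcs of $\overline{D}$ at the immediate neighbours $N_1, N_2$ rather than at crossings one step further along the strand. A secondary point to dispatch is the degenerate situation flagged in the definition, in which the two under-arcs at a crossing coincide (as may occur for the Hopf link): there the relation reads $2b - 2a = 0$, but this form is manifestly preserved under mirroring, so the transpose identity is unaffected.
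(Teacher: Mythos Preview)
Your proof is correct and follows essentially the same approach as the paper: both arguments verify the transpose identity by tracking, via the alternating hypothesis, how the under-arcs at a crossing in one diagram correspond to the over-arc labels at neighbouring crossings in the mirror. The paper argues from row~$i$ of $C'(D)$ to column~$i$ of $C'(\overline{D})$ (relying on Figure~\ref{mirrorcrossing}), whereas you argue from column~$i$ of $C'(D)$ to row~$i$ of $C'(\overline{D})$; your version is more explicit in first establishing the arc--crossing bijection and in the strand-tracking step, but the underlying idea is identical.
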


\begin{proof}

Denote the crossings of the diagram $D$ by $c_1, \dots,c_n$ and let the over-arc at the crossing $c_i$ be denoted by $a_i$. Notice that, in the matrix $C'(D)$ all entries on the diagonal are $2$. We obtain $\overline{D}$ by crossing-change operations and we keep the ordering and names of the crossings. Now, let $\overline{a_i}$ denote the  over-arc at the crossing $c_i$ in the diagram $\overline{D}$. In the row corresponding to the crossing $c_i$, suppose the columns corresponding to the arcs $a_j$ and $a_k$ have $-1$ as entries. Then in the matrix $C'(\overline{D})$, the column corresponding to $\overline{a_i}$ must have entries $-1$ in the rows corresponding to the crossings $c_j$ and $c_k$; see Figure \ref{mirrorcrossing}.

\end{proof}

Recall that if $\delta(D)\neq0$, then  $Col^{red}(D)$ is a finite group whose invariant factor decomposition is $Col^{red}(D) = \mathbb Z_{n_1}\oplus \mathbb Z_{n_2}\oplus \cdots \oplus \mathbb Z_{n_s}$, with $n_{i+1} | n_i$ for all $i$. Notice that, $s$ is the minimum number of generators of this group and $n_{1}$ is the annihilator of the group. Let $C(D)$ denote the reduced crossing matrix of $D$, which is the matrix obtained from $C'(D)$ by removing its last row and last column. We call the arc corresponding to the last column of $C'(D)$ the \textbf{base arc}. This matrix describes the group $Col^{red}(D)$. The matrix $C^{-1}(D)$ is a matrix with rational entries. However, $n_1C^{-1}(D)$ is an integral matrix, which we denote by $L_{n_1}(D).$ Observe that the columns of $L_{n_1}(D)$ modulo $n_1$ represent Fox $n_1$-colorings of the diagram $D$ after coloring the base arc by color $0$.

\

The following result also holds for reduced non-alternating links.

\begin{theorem}\label{generatorlemma}
        Let $D$ be a reduced  diagram of a link with non-zero determinant. Then the columns of $L_{n_1}(D)$  modulo $n_1$  generate the space of Fox $n_1$-colorings of $D$.
        
\end{theorem}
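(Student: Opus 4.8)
The plan is to reduce the statement to a short piece of linear algebra over $\mathbb{Z}$ and then over $\mathbb{Z}_{n_1}$, using only the defining identity $L_{n_1}(D) = n_1 C^{-1}(D)$ (equivalently $C(D)\,L_{n_1}(D) = n_1 I$) together with the fact that the rows of the unreduced crossing matrix $C'(D)$ sum to zero. Write $C = C(D)$ for the reduced crossing matrix and recall $\det C = \pm\delta(D) \neq 0$, so that $C^{-1}$ exists over $\mathbb{Q}$ and $L_{n_1}=n_1C^{-1}$ is integral. First I would dispose of the trivial colorings: given any Fox $n_1$-coloring $x$, subtracting the trivial coloring whose constant value equals the color of the base arc produces a Fox $n_1$-coloring that vanishes on the base arc. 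Hence it suffices to show that the columns of $L_{n_1}$ generate the subgroup $Col^{red}_{n_1}(D)$ of colorings assigning $0$ to the base arc, which is exactly $\ker(C \bmod n_1)=\{x' \in \mathbb{Z}_{n_1}^{\,n-1} : C x' \equiv 0 \!\!\pmod{n_1}\}$.

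Next I would establish the two inclusions. For containment, the identity $C L_{n_1} = n_1 I \equiv 0 \pmod{n_1}$ shows at once that every column of $L_{n_1}$ lies in $\ker(C \bmod n_1)$; padding such a column with the value $0$ on the base arc yields an honest Fox $n_1$-coloring, because the relation coming from the deleted (last) crossing is the negative of the sum of the remaining relations — the rows of $C'(D)$ sum to zero — and is therefore automatically satisfied once the first $n-1$ relations hold and the base arc is colored $0$.

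The heart of the proof is the reverse inclusion (generation). Let $x' \in \mathbb{Z}_{n_1}^{\,n-1}$ satisfy $Cx' \equiv 0 \pmod{n_1}$ and lift it to an integral vector $\tilde x \in \mathbb{Z}^{\,n-1}$. Then $C\tilde x \equiv 0 \pmod{n_1}$ componentwise, so $C\tilde x = n_1 u$ for some $u \in \mathbb{Z}^{\,n-1}$. Multiplying by $C^{-1}$ gives the exact integral identity
$$\tilde x = C^{-1}(n_1 u) = (n_1 C^{-1}) u = L_{n_1}\, u,$$
so $\tilde x$ is an integral combination of the columns of $L_{n_1}$; reducing modulo $n_1$ exhibits $x'$ as a $\mathbb{Z}_{n_1}$-combination of the columns of $L_{n_1} \bmod n_1$. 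This establishes generation.

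I expect the main obstacle to be conceptual rather than computational: keeping the bookkeeping of trivial colorings and the base-arc convention straight, and justifying that the deleted crossing relation is redundant, so that a vector in $\ker(C \bmod n_1)$ genuinely extends to a coloring of all of $D$. Once the identity $L_{n_1} C = n_1 I$ is in hand, the generation step is a one-line lift-and-divide argument. As a consistency check I would, if desired, compare orders via the Smith normal form $C = U\Sigma V$ with $\Sigma = \mathrm{diag}(d_1,\dots,d_{n-1})$ and $d_i \mid n_1$: this shows both $\ker(C \bmod n_1)$ and the reduction modulo $n_1$ of the column lattice of $L_{n_1}=V^{-1}\,\mathrm{diag}(n_1/d_i)\,U^{-1}$ have order $\prod_i d_i = \delta(D)$, confirming that the two subgroups coincide.
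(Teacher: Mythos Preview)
Your argument is correct and takes a genuinely different route from the paper's. The paper works entirely through the Smith normal form: it writes $C_{SNF}(D)=\mathrm{diag}(n_1,\dots,n_s,1,\dots,1)$, so that $L^{SNF}_{n_1}=n_1\,C_{SNF}^{-1}=\mathrm{diag}(1,\,n_1/n_2,\dots,n_1/n_s,\,n_1,\dots,n_1)$, and then simply observes that the $i$-th column of this diagonal matrix generates the $\mathbb{Z}_{n_i}$ summand of $Col^{red}_{n_1}(D)\cong Hom\big(\bigoplus_i\mathbb{Z}_{n_i},\mathbb{Z}_{n_1}\big)$, transferring the conclusion back to $L_{n_1}$ implicitly via the unimodular change of basis. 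You instead bypass diagonalization altogether: the identity $C\,L_{n_1}=n_1I$ gives containment in $\ker(C\bmod n_1)$ for free, and the lift-and-divide step $C\tilde x=n_1u\Rightarrow\tilde x=L_{n_1}u$ gives generation directly, without ever changing basis. This is more elementary and more self-contained; the Smith-normal-form computation you offer only as a ``consistency check'' at the end is essentially the paper's entire proof.

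One small caveat worth flagging: your justification that the deleted crossing relation is redundant invokes ``the rows of $C'(D)$ sum to zero,'' which is equivalent to each arc being the over-arc at exactly one crossing. That holds for alternating diagrams but not in general---the paper itself exhibits row relations with mixed $\pm 1$ coefficients for the non-alternating examples $8_{19}$ and the Conway knot. Since the only later use of this theorem in the paper is for alternating $D$, your argument is adequate there; for the non-alternating case claimed in the theorem one would need the slightly stronger (but still true) fact that the deleted row lies in the integer row span of $C$, not merely that the all-ones combination vanishes.
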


\begin{proof}

 Let $C(D)$ be the reduced crossing matrix of $D$ and let $Col^{red}(D) = \mathbb Z_{n_1}\oplus \mathbb Z_{n_2}\oplus \cdots \oplus \mathbb Z_{n_s}$, with $n_{i+1} | n_i$ for all $i$. After row and column operations, $C(D)$ can be reduced to its Smith normal form, denoted by $C_{SNF}(D)$, given below.

\begin{equation*}
C_{SNF}(D) =
\begin{pmatrix}
    n_{1} & &  && \\
    &n_{2} &  &&&\text{\Huge0}& \\
    & & \ddots&&& \\
    & &&n_s && \\
     &&&&1 && \\
&\text{\Huge0}&&&&\ddots &&\\
    & & &&&& 1
  \end{pmatrix}
\end{equation*}

Its inverse matrix, $C^{-1}_{SNF}(D)$, with entries in $\mathbb{Q}$ has the following form.
\begin{equation*}
C^{-1}_{SNF}(D) =
\begin{pmatrix}
    1/n_{1} & &  && \\
    &1/n_{2} &  &&&\text{\Huge0}& \\
    & & \ddots&&& \\
    & &&1/n_s && \\
     &&&&1 && \\
&\text{\Huge0}&&&&\ddots &&\\
    & & &&&& 1
  \end{pmatrix}
\end{equation*}

Thus, we obtain the following integral matrix $L^{SNF}_{n_1}(D)$.
\begin{equation*}
L^{SNF}_{n_1}(D)=n_1 C^{-1}_{SNF}(D)=
\begin{pmatrix}
    n_1/n_{1} & &  && \\
    &n_1/n_{2} &  &&&\text{\Huge0}& \\
    & & \ddots&&& \\
    & &&n_1/n_s && \\
     &&&&n_1 && \\
&\text{\Huge0}&&&&\ddots &&\\
    & & &&&& n_1
  \end{pmatrix}
\end{equation*}

\

Now, the $i^{\mathit{th}}$ column $(0, 0, \dots, n_{1}/n_{i}, \dots, 0)^T$ of $L^{SNF}_{n_1}(D)$ modulo $n_1$  with $i \le s$ generates the subgroup  $\mathbb{Z}_{n_i}$ of $\mathbb{Z}_{n_1}$. Since $Col^{red}_{n_1}(D) = Hom(\mathbb Z_{n_1}\oplus \mathbb Z_{n_2}\oplus \cdots  \oplus \mathbb Z_{n_s},  \mathbb Z_{n_1})$, therefore, the columns of $L^{SNF}_{n_1}$ generate the group $Col^{red}_{n_1}(D)$, as desired.\\
\end{proof}

For alternating diagrams we can prove the following stronger result, which proves part \ref{b}, and equivalently, part \ref{a} of Conjecture \ref{ConAPS}.

\begin{theorem}\label{mainlemma}

    Let $D$ be a reduced alternating diagram of a prime link. For any two arcs $a_i$ and $a_j$, there exists a column of $L_{n_1}(D)$ which distinguishes them.
    
\end{theorem}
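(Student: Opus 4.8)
The plan is to turn the statement about columns of $L_{n_1}(D)$ into a self-dual statement about its rows, and then to extract the needed distinctness from the alternating and prime hypotheses. I would begin with the elementary reformulation: the color a column of $L_{n_1}(D)$ assigns to an arc is the entry of that column in the arc's row, so some column separates $a_i$ from $a_j$ precisely when the rows of $L_{n_1}(D)$ indexed by $a_i$ and $a_j$ are distinct modulo $n_1$, while an arc is separated from the base arc precisely when its row is nonzero modulo $n_1$. By Theorem \ref{generatorlemma} the columns of $L_{n_1}(D)$ generate $Col^{red}_{n_1}(D)$, so a separating column exists if and only if a separating Fox $n_1$-coloring exists, i.e. if and only if $\beta(a_i)\neq\beta(a_j)$ in $Col^{red}(D)=H_1(M^{(2)}_L,\mathbb Z)$. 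It thus suffices to show that the rows of $L_{n_1}(D)$ are individually nonzero and pairwise distinct modulo $n_1$.

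Next I would bring in the mirror image. Removing the last row and column commutes with transposition, so Lemma \ref{Mirror} gives $C(\overline D)=C(D)^{T}$ and hence $L_{n_1}(D)^{T}=L_{n_1}(\overline D)$; the rows of $L_{n_1}(D)$ are therefore exactly the columns of $L_{n_1}(\overline D)$, that is, the fundamental colorings $\psi_k=L_{n_1}(\overline D)e_k$. From $C(\overline D)L_{n_1}(\overline D)=n_1 I$ one reads off that $\psi_k$ is the unique integral vector with $C(\overline D)\psi_k=n_1 e_k$, and that $\psi_k\equiv\psi_\ell\pmod{n_1}$ exactly when $C(\overline D)m=e_k-e_\ell$ has an integral solution $m$, i.e. when the two corresponding arcs of $\overline D$ represent the same class in $H_1(M^{(2)}_{\overline L},\mathbb Z)$. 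Thus Lemma \ref{Mirror} simultaneously shows that the theorem for $D$ and for $\overline D$ are equivalent and lets me recast the whole problem in the single language of the $\psi_k$: I must prove that the $\psi_k$ are nonzero and pairwise distinct modulo $n_1$.

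The structural input is the alternating hypothesis. In an alternating diagram each arc runs over exactly one crossing, so arcs correspond bijectively to overcrossings and $C(D)$ is a genuine square matrix with diagonal $2$ and exactly two entries $-1$ in each row; it is therefore a weakly diagonally dominant $Z$-matrix, and deleting the base row and column — which makes the rows meeting the base arc strictly dominant — turns it into a nonsingular M-matrix. As a prime, hence non-split, diagram makes $C(D)$ irreducible, its inverse is entrywise strictly positive, so $L_{n_1}(D)=n_1C(D)^{-1}$ is a strictly positive integral matrix. This integral positivity is exactly where alternation is used: for a non-alternating diagram an arc may run over several crossings, the bijection with crossings and the square, diagonally dominant shape are lost, and the conclusion itself can fail.

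The remaining step — which I expect to be the main obstacle — is to pass from this positivity over $\mathbb Z$ to nonvanishing and pairwise distinctness of the $\psi_k$ after reduction modulo $n_1$. Over $\mathbb Z$ the situation is favorable: following \cite{MS} one realizes each $\psi_k$ as the single-defect pseudo-coloring carrying defect $n_1$ at the crossing $c_k$, and uses the alternating pattern of over- and undercrossings to show that its integer colors vary monotonically along the diagram and are therefore all distinct. The difficulty is that reduction modulo $n_1$ can collapse distinct integer colors, so a single $\psi_k$ need not separate a prescribed pair; this is precisely why a whole family of colorings, rather than one, is required. Here I would use primeness — which excludes the flype-type degeneracies that could force two arcs into the same homology class — to guarantee that for every pair $a_i,a_j$ at least one crossing $c_k$ yields a $\psi_k$ separating them modulo $n_1$. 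Establishing this last implication, and checking that the monotonicity survives the passage from $\mathbb Z$ to $\mathbb Z_{n_1}$, is the delicate heart of the proof; feeding the resulting distinctness of the $\psi_k$ back through Lemma \ref{Mirror} then shows that the columns of $L_{n_1}(D)$ separate every pair of arcs, as claimed.
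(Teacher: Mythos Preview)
Your scaffolding is exactly the paper's: reduce to showing the rows of $L_{n_1}(D)$ are nonzero and pairwise distinct modulo $n_1$, invoke Lemma \ref{Mirror} to identify these rows with the columns $\psi_k$ of $L_{n_1}(\overline D)$, and observe that $\psi_k\equiv 0$ (resp.\ $\psi_k\equiv\psi_\ell$) modulo $n_1$ is equivalent to the existence of an integral $m$ with $C(\overline D)\,m=e_k$ (resp.\ $=e_k-e_\ell$). But you stop one step short of the finish and then take an unnecessary detour through M-matrix positivity and monotonicity which, as you yourself concede, you cannot close.

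The step you are missing is that an integral solution $m$ to $C(\overline D)\,m=e_k-e_\ell$ (or to $C(\overline D)\,m=e_k$), extended by $0$ on the base arc, \emph{is} a $(-1)$-pseudo coloring of $\overline D$ in the sense of Definition \ref{pseudodef}: the Fox relation holds at every crossing except $c_k$ and $c_\ell$ (respectively $c_k$ and the base crossing, since for an alternating diagram the columns of $C'$ sum to zero, forcing the defect at the deleted crossing to be minus the sum of the others). Mattman and Solis \cite{MS} prove that reduced, prime, alternating diagrams admit no such pseudo colorings, and that contradiction finishes the argument immediately. No positivity of $C^{-1}$, no monotonicity along the diagram, and no delicate ``survival under reduction mod $n_1$'' is required; you had the equation $C(\overline D)\,m=e_k-e_\ell$ in hand and only needed to recognize what $m$ is. The paper's presentation differs only cosmetically: for the pairwise case it rechooses $a_j$ as base arc (Step 2), so that $a_i$ now has all-zero row, and then applies the pseudo-coloring contradiction (Step 1) once.
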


\begin{proof}
    
Suppose the arcs (indexing rows) of the coloring matrix $n_{1}C^{-1}(D) = L_{n_1}(D)$ are given by $a_1$, $a_2$, \dots, $a_{n-1}$ as shown below. $$L_{n_1}(D)=n_{1}C^{-1}(D) = \begin{pNiceMatrix}[first-row,last-row,first-col,last-col]
     \ \ \ \ \  &         &          &    &   \\
\textcolor{blue}{a_1:}  \ \ \ \ \ \  & c_{1,1} &  c_{1,2} & \cdots & c_{1,n-1} & \ \ \\
   \textcolor{blue}{a_2:} \ \ \ \ \ \   & c_{2,1} & c_{2,2} & \cdots & c_{2,n-1} & \ \ \\ 
    \textcolor{blue}{\vdots \ \ \ } \ \ \ \ \ \  & \vdots & \vdots & \vdots & \vdots & \ \ \\
 \textcolor{blue}{a_{n-1}:}\ \ \ \ \ \  &  c_{n-1,1} & c_{n-1,2} & \cdots & c_{n-1,n-1} & \ \ \\
 \textcolor{blue}{a_n:} \ \ \ \ \ \  & 0 & 0 & \cdots & 0     & \ \ \\
\end{pNiceMatrix}$$

Recall that, the reduced crossing matrix $C(D)$ is obtained from $C'(D)$ by removing its last row and last column. Now, each column of $L_{n_1}(D)$ colors the the remaining first $n-1$ arcs of the diagram. For a complete Fox $n_1$-coloring of $D$ we color the last (base) arc $a_n$ by color $0$. If any $c_{i,1}=0 \mod n_1$ for $i < n$, then column $C_1$ modulo $n_1$ cannot distinguish between the arcs $a_n$ and $a_i$. If all the entries of the rows corresponding to $a_i$ and $a_j$  are not identical  modulo $n_1$, then they can be automatically distinguished by the column in which they are different. 

\

\textbf{Step 1:}  If there is no column $C_j$ of $L_{n_1}(D)$ such that $c_{i,j} \neq 0$ mod $n_1$, then every entry in the $i^{\mathit{th}}$ row is $0$ mod $n_1$. It follows that in the transpose matrix $L_{n_1}^{T}(D)$, the column $C_{i}^{T}$ is the zero column  modulo $n_1$. This would result in the existence of a pseudo coloring of $\overline{D}$ (see Definition \ref{pseudodef} and \cite{MS}), which is a contradiction. Thus, the base arc $a_n$ can be distinguished from any other arc by some column in $L_{n_1}(D)$ modulo $n_1$.

\

\textbf{Step 2:} Furthermore, if there are two arcs $a_i$ and $a_j$ with the same color in every column of $L_{n_1}(D)$, then we choose arc $a_j$ as the base arc, which implies that the colors of $a_i$ are equal to zero. So we are back to Step 1.

\end{proof}

The next theorem proves part \ref{c} of Conjecture \ref{ConAPS}, which is a more general version of Theorem \ref{mainlemma}.

\begin{theorem}\label{mainlemma2}

    If $Col^{red}(D) = \mathbb Z_{n_1} \oplus \mathbb Z_{n_2} \oplus \cdots \oplus \mathbb Z_{n_s}$, with $n_{i+1} | n_i$, then there are $s$ Fox $n_1$-colorings (not necessarily corresponding to the columns of the coloring matrix) which distinguish all arcs. That is, for every pair of arcs of $D$, one of these $n_{1}$-colorings distinguishes them.
     
\end{theorem}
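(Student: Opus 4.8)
The plan is to derive this from Theorem \ref{mainlemma} together with the structure of the reduced coloring group, and the key observation is that evaluation of a coloring at a fixed arc is a group homomorphism. Consequently, if some generating family of colorings already fails to separate two arcs, then \emph{no} coloring can separate them, which will contradict Theorem \ref{mainlemma}.

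First I would use the duality recorded in the Remark to write $Col^{red}_{n_1}(D) \cong Hom\big(Col^{red}(D), \mathbb{Z}_{n_1}\big) \cong Col^{red}(D) \cong \mathbb{Z}_{n_1}\oplus \mathbb{Z}_{n_2}\oplus\cdots\oplus\mathbb{Z}_{n_s}$. Since $s$ is the number of invariant factors of $Col^{red}(D)$, this group is generated by $s$ elements and by no fewer. I would therefore fix any minimal generating set $y_1, y_2, \dots, y_s$ of $Col^{red}_{n_1}(D)$; concretely, each $y_k$ is a Fox $n_1$-coloring of $D$ normalized so that the base arc is colored $0$. The claim to be proved is that this family separates all arcs.

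For each arc $a_i$, consider the evaluation map $\mathrm{ev}_i \colon Col^{red}_{n_1}(D) \to \mathbb{Z}_{n_1}$ given by $\mathrm{ev}_i(y) = y(a_i)$. As the group operation is pointwise addition of colorings, each $\mathrm{ev}_i$ is a homomorphism, and so is the difference $\mathrm{ev}_i - \mathrm{ev}_j$ for any pair of arcs. By definition, the family $y_1, \dots, y_s$ distinguishes $a_i$ from $a_j$ exactly when $(\mathrm{ev}_i - \mathrm{ev}_j)(y_k) \neq 0$ for at least one $k$.

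The main step is a short contradiction. Suppose the chosen colorings fail to distinguish some pair $a_i \neq a_j$, so that $(\mathrm{ev}_i - \mathrm{ev}_j)(y_k)=0$ for every $k$. Since $y_1, \dots, y_s$ generate $Col^{red}_{n_1}(D)$ and $\mathrm{ev}_i - \mathrm{ev}_j$ is a homomorphism vanishing on a generating set, it vanishes identically; that is, every coloring in $Col^{red}_{n_1}(D)$ assigns the same color to $a_i$ and $a_j$. In particular, every column of $L_{n_1}(D)$, being itself a coloring in $Col^{red}_{n_1}(D)$, gives $a_i$ and $a_j$ equal colors, contradicting Theorem \ref{mainlemma}. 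Hence $y_1, \dots, y_s$ separate every pair of arcs. I expect the only delicate point to be bookkeeping around the base arc, namely verifying that ``distinguishing arcs'' descends correctly from $Col_{n_1}(D)$ to its reduced quotient $Col^{red}_{n_1}(D)$ (trivial colorings shift every arc by the same constant, so they never affect separation) and that Theorem \ref{mainlemma} is applied to these same normalized colorings. Once the homomorphism property of evaluation is in place, the reduction to Theorem \ref{mainlemma} is immediate and involves no computation.
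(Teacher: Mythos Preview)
Your argument is correct and follows essentially the same route as the paper: choose $s$ generators of $Col^{red}_{n_1}(D)$ and reduce to Theorem \ref{mainlemma} via the observation that if a generating set fails to separate two arcs then no coloring does. Your use of the evaluation homomorphism $\mathrm{ev}_i-\mathrm{ev}_j$ makes explicit what the paper leaves implicit, and your bookkeeping about the base arc and the reduced quotient is appropriate; otherwise the two proofs are the same.
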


\begin{proof}

 Denote the generators of the group $Col^{red}(D)$ by $a_1$, $a_2$, \dots, $a_s$. Every generator $a_i$ is a linear combination of some columns of the coloring matrix $L_{n_1}(D)$ modulo $n_1$ (see Theorem \ref{generatorlemma}). Therefore, they correspond to some coloring of the diagram $D$. Hence, for every pair of arcs there is a column of $L_{n_1}(D)$ modulo $n_1$ that distinguishes them. 

\end{proof}

\begin{corollary}\label{Coro}

If $Col^{red}(D)$ is the cyclic group $\mathbb{Z}_{n_1}$,\hfill

\begin{itemize}
    \item [(a)] then there exists a non-trivial Fox $n_1$-coloring that distinguishes all arcs.
    
    \item [(b)] Additionally, if $n_1$ is a prime number, then the original Kauffman-Harary conjecture holds.  That is, every non-trivial Fox $n_1$-coloring distinguishes all arcs.
\end{itemize}

\end{corollary}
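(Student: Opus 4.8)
The plan is to deduce both parts directly from Theorem \ref{mainlemma2} specialized to an invariant-factor decomposition of length $s = 1$. For part (a), the hypothesis $Col^{red}(D) = \mathbb{Z}_{n_1}$ is precisely this case, so Theorem \ref{mainlemma2} furnishes a single Fox $n_1$-coloring that distinguishes every pair of arcs. This coloring must be non-trivial, since a trivial coloring assigns one color to all arcs and so cannot separate any pair; hence any arc-distinguishing coloring lies outside $Col^{trivial}_{n_1}(D)$.

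For part (b) I would first reduce the claim to a statement about a one-dimensional vector space. Setting $n_1 = p$ prime and using $Col^{red}(D) = \mathbb{Z}_p$, the remark following Conjecture \ref{ConAPS} identifies $Col^{red}_p(D) = Hom(\mathbb{Z}_p, \mathbb{Z}_p) \cong \mathbb{Z}_p$, which is one-dimensional over the field $\mathbb{F}_p$. Let $y_1$ be the distinguishing coloring from part (a) and $\overline{y_1}$ its class in $Col^{red}_p(D)$. As $y_1$ distinguishes arcs it is non-trivial, so $\overline{y_1} \neq 0$ and therefore spans $Col^{red}_p(D)$. Consequently any non-trivial Fox $p$-coloring $f$ satisfies $\overline{f} = \lambda\,\overline{y_1}$ for a unique $\lambda \in \mathbb{Z}_p$, with $\lambda \neq 0$ by non-triviality of $f$. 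Lifting to honest colorings, $f = \lambda y_1 + t$ for some constant coloring $t$, so for any arcs $a_i, a_j$ the contribution of $t$ cancels and $f(a_i) - f(a_j) = \lambda\big(y_1(a_i) - y_1(a_j)\big)$.

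The final step, and the only point where primality is genuinely used, is to transport the distinguishing property across this scalar multiplication. For any pair $a_i, a_j$, part (a) gives $y_1(a_i) - y_1(a_j) \neq 0$ in $\mathbb{Z}_p$; since $p$ is prime, $\mathbb{Z}_p = \mathbb{F}_p$ has no zero divisors, so multiplying by the unit $\lambda$ preserves non-vanishing and $f(a_i) \neq f(a_j)$. As this holds for every pair and $f$ was an arbitrary non-trivial coloring, the original Kauffman-Harary conjecture follows. I expect this last step to be the crux: the argument breaks down exactly when $\mathbb{Z}_{n_1}$ stops being a field, which explains why the classical statement in (b) demands $n_1$ prime while the existence statement in (a) persists for composite $n_1$.
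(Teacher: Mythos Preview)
Your proposal is correct and follows essentially the same approach as the paper. For part (a) both you and the paper invoke Theorem \ref{mainlemma2} at $s=1$; for part (b) the paper's one-line justification ``every non-zero element of $\mathbb{Z}_{n_1}$ is its generator'' is precisely the fact you unpack in detail---that any non-trivial $p$-coloring differs from the distinguishing one $y_1$ by a unit scalar in $\mathbb{F}_p$ (plus a trivial coloring), so arc-distinguishing survives.
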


\begin{proof}

Part (a) follows directly from Theorem \ref{mainlemma2}, for $s=1$. Part (b) follows because every non-zero element of $\mathbb{Z}_{n_1}$ is its generator.

\end{proof}

\section{Non-prime alternating links}\label{nonprimealt}

Theorems \ref{mainlemma} and \ref{mainlemma2}  do not hold as stated for the connected sum of alternating links\footnote{The connected sum of alternating links, is an alternating link. For example, see \cite{PBIMW}.} (see part \ref{(a)} of Lemma \ref{nonprimelemma}). In Theorem \ref{connectedsumnonprime}, we present a version of the GKH conjecture which holds for non-prime alternating links.

\begin{lemma}\cite{Prz1} 
\label{nonprimelemma}

    Let $D= D_1 \ \# \ D_2$ be the connected sum of two link diagrams. Then,
    
\begin{itemize}
    \item [\namedlabel{(a)}{(a)}] the arcs connecting the two components represent the same element in $Col(D)$, and
    
    \item[(b)] $Col^{red}(D_1 \ \# \ D_2) \cong Col^{red}(D_1)\oplus Col^{red}(D_2) $.
     
\end{itemize}
    
\end{lemma}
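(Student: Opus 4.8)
The plan is to establish part~\ref{(a)} first and then deduce part~(b) from it by a presentation argument: I choose the shared connecting arc as the base arc, so that the defining presentation of $Col^{red}(D_1\ \#\ D_2)$ visibly splits as a direct sum.

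For part~\ref{(a)}, I would analyze the connected-sum region directly. Recall that $D_1\ \#\ D_2$ is obtained by deleting a small sub-arc from an arc $\alpha$ of $D_1$ and from an arc $\beta$ of $D_2$ and joining the four free ends by a band containing no crossings. Each of the two resulting connecting arcs $A$ and $B$ therefore consists of a piece of $\alpha$ joined to a piece of $\beta$, and since the band introduces no crossing, the color is constant along each of $A$ and $B$. To see that $A$ and $B$ represent the same element $g_A=g_B$ of $Col(D_1\ \#\ D_2)$, I would use that in $Col(D)$ any $\mathbb Z$-combination of the defining relators $2b-a-c=0$ vanishes, and apply this to the sum of the relators coming from the crossings of $D_1$ alone. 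In a reduced alternating diagram every arc is the over-arc at exactly one crossing; hence every arc lying entirely in $D_1$ contributes with total coefficient $2-2=0$ (over once, under at its two endpoints), while $A$ and $B$ each meet the $D_1$-crossings at a single endpoint, one of them inheriting the over-crossing of $\alpha$ and the other not, so they contribute with coefficients $+1$ and $-1$. The surviving relation is exactly $g_A-g_B=0$. Equivalently, under $Col(D)=\mathbb Z\oplus H_1(M^{(2)}_L,\mathbb Z)$ the class $[A]-[B]$ pushes into a neighborhood of the separating connected-sum sphere in the double branched cover, where it bounds; this is the homological computation of \cite{Prz1} and yields part~\ref{(a)} for arbitrary link diagrams.

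For part~(b) I would first record the presentation of the full coloring group. Since the band contributes no crossings, the crossings of $D_1\ \#\ D_2$ are exactly those of $D_1$ together with those of $D_2$, so $Col(D_1\ \#\ D_2)$ is generated by $arcs(D_1)\cup arcs(D_2)$ subject to the crossing relations of each diagram together with the single extra relation $g_\alpha=g_\beta$ supplied by part~\ref{(a)}. To pass to the reduced group I would take the common connecting arc as the base arc, which is legitimate because $Col^{red}(D)=Col(D)/Col^{trivial}(D)$ is independent of the choice of base arc (Definition \ref{coltrivialdefi}). Setting this shared arc to $0$ forces $g_\alpha=g_\beta=0$, after which the presentation decouples completely: the generators $arcs(D_1)$ with the $D_1$-relations and $g_\alpha=0$ present $Col^{red}(D_1)$, the generators $arcs(D_2)$ with the $D_2$-relations and $g_\beta=0$ present $Col^{red}(D_2)$, and no relation links the two blocks. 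Hence $Col^{red}(D_1\ \#\ D_2)\cong Col^{red}(D_1)\oplus Col^{red}(D_2)$.

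The main obstacle is part~\ref{(a)}: making precise, for an arbitrary connected sum, that the two physical strands crossing the band carry the \emph{same element} of $Col(D)$, not merely the same value in each individual Fox coloring. The combinatorial cancellation above settles this cleanly in the reduced alternating case that concerns us here, whereas the general statement is the homological one of \cite{Prz1}. A secondary point to handle carefully in part~(b) is the bookkeeping of the trivial colorings: one must verify that $Col^{trivial}(D_1\ \#\ D_2)$ corresponds to the simultaneous trivial colorings of the two sides, so that passing to the quotient introduces no spurious identification across the connected-sum sphere and destroys none.
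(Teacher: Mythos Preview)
The paper does not prove this lemma at all --- it is quoted from \cite{Prz1} without argument --- so there is no in-paper proof to compare against.

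Your proposal is essentially correct for the situation the paper actually uses, namely alternating $D_i$. The cancellation in part~(a), summing the $D_1$ crossing relations and using that in a reduced alternating diagram each arc is over exactly once and under exactly twice, is valid and yields $g_A-g_B=0$ cleanly. You are right that this step genuinely uses the alternating hypothesis: for a general $D_1$ the column sums of the crossing matrix need not vanish (this is exactly the phenomenon the paper flags in Section~\ref{odds}), and the argument collapses. Since Lemma~\ref{nonprimelemma} is \emph{stated} for arbitrary link diagrams, your combinatorial proof does not establish it in that generality; you correctly flag this and defer to the homological argument of \cite{Prz1}, though what you write there is a sketch rather than a proof. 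For part~(b) your presentation argument is correct and standard: once (a) gives $A=B$, taking that common arc as base arc makes the presentation of $Col^{red}(D)$ visibly split into the two summands.

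One small point of phrasing: the generators of $Col(D_1\#D_2)$ are the arcs of $D$, not literally $arcs(D_1)\cup arcs(D_2)$. Part~(a) is what licenses replacing the pair $A,B$ by a single generator playing the role of both $\alpha$ and $\beta$; after that substitution your stated presentation is correct, but the relation $g_\alpha=g_\beta$ is not an \emph{extra} relation ``supplied by'' part~(a) --- rather, part~(a) is what shows the rewritten presentation is isomorphic to the original one.
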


\begin{theorem}\label{connectedsumnonprime}

    Let $D=  D_1 \ \# \ \ D_2  \  \# \ \cdots \  \# \ D_n$, where $D_i$ is a reduced alternating diagram of a prime link $L_i$, for $i=1, 2, \dots, n$. Then, 
    
\begin{itemize}

    \item [(a)] for any pair of arcs different from arcs joining $D_i$  with $D_{i+1}$, there exists a Fox $n_1$-coloring which distinguishes them, and 
    
    \item [(b)] there are $t$ ($t \leq s$) Fox $n_1$-colorings such that any pair of arcs different from the ones joining $D_i$ with $D_{i+1}$, is distinguished by one of them.
    
\end{itemize}
    
\end{theorem}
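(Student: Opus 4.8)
The plan is to reduce both parts to a single structural fact: under the isomorphism $Col^{red}(D) \cong Col^{red}(D_1) \oplus \cdots \oplus Col^{red}(D_n)$ supplied by Lemma \ref{nonprimelemma}, two arcs can be distinguished by a Fox $n_1$-coloring precisely when they represent different elements of $Col^{red}(D)$. Indeed, a (reduced) Fox $n_1$-coloring is an element of $Col^{red}_{n_1}(D) = Hom(Col^{red}(D), \mathbb{Z}_{n_1})$, and since $n_1$ is the exponent (annihilator) of the finite group $Col^{red}(D)$, this Hom group separates points: for any non-zero $x \in Col^{red}(D)$ there is a homomorphism into $\mathbb{Z}_{n_1}$ that does not vanish on $x$. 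Thus everything comes down to computing the images of the arcs in the direct sum and checking which differences are non-zero.

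For part (a) I would fix the connecting arcs as the base arc, so that they all receive color $0$; by Lemma \ref{nonprimelemma} they represent a single element, which we take to be the identity. With this choice an arc $a$ lying in the summand $D_i$ maps to the element of $\bigoplus_j Col^{red}(D_j)$ whose $i$-th coordinate is the class $[a]_{D_i}$ of $a$ in $Col^{red}(D_i)$ (computed with the connecting arc of $D_i$ as base) and whose other coordinates vanish. Now take two arcs $a, a'$, neither of which joins consecutive summands. If they lie in the same summand $D_i$, then $[a]_{D_i} \neq [a']_{D_i}$ by Theorem \ref{mainlemma} applied to the prime alternating diagram $D_i$, so their images differ in the $i$-th coordinate. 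If they lie in different summands $D_i$ and $D_j$, then $a$ is a \emph{non-connecting} arc of $D_i$ and hence $[a]_{D_i} \neq 0$, again by Theorem \ref{mainlemma} (it is separated from the connecting base arc of $D_i$); therefore the $i$-th coordinate of the difference is already non-zero. In either case the images differ, and the separation property above yields a distinguishing Fox $n_1$-coloring. Concretely, in the cross-summand case one may exhibit it: color $D_i$ by a coloring with $a \neq 0$ (scaled up from $\mathbb{Z}_{n_1^{(i)}}$ to $\mathbb{Z}_{n_1}$) and color every other summand trivially.

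For part (b) I would count invariant factors. Writing $Col^{red}(D) = \mathbb{Z}_{n_1} \oplus \cdots \oplus \mathbb{Z}_{n_s}$ with $n_{i+1} \mid n_i$, the dual $Col^{red}_{n_1}(D) = Hom(Col^{red}(D), \mathbb{Z}_{n_1})$ is again a sum of $s$ cyclic groups, hence admits a generating set $g_1, \ldots, g_s$. Since $D$ is a reduced diagram of non-zero determinant, Theorem \ref{generatorlemma} guarantees that each $g_k$ is represented by a genuine Fox $n_1$-coloring (a combination of columns of $L_{n_1}(D)$). If two admissible arcs $a, a'$ satisfied $g_k(a) = g_k(a')$ for all $k$, then $f(a) = f(a')$ for every $f \in Hom(Col^{red}(D), \mathbb{Z}_{n_1})$, forcing $[a] = [a']$ in $Col^{red}(D)$ and contradicting part (a). Hence these $s$ colorings jointly distinguish every pair other than connecting arcs, giving $t \leq s$.

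The main obstacle, and the only place where alternation and primeness enter, is the cross-summand step of part (a): one must know that a non-connecting arc of $D_i$ is genuinely separated from the connecting arc inside the prime summand, i.e., that its class in $Col^{red}(D_i)$ is non-zero. This is exactly the (now established) GKH statement for the prime alternating diagram $D_i$ via Theorem \ref{mainlemma}; without it, the image of a cross-summand arc could collapse to the identity and the pair would become indistinguishable, precisely as genuinely happens for the excluded connecting arcs by Lemma \ref{nonprimelemma}. Once this non-vanishing is secured, the direct-sum bookkeeping and the duality/separation argument are routine.
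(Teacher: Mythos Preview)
Your proposal is correct and follows the same route the paper indicates: the paper's own proof is the single sentence ``This result follows from Theorems \ref{generatorlemma} and \ref{mainlemma}, and Lemma \ref{nonprimelemma},'' and you have supplied precisely the expected expansion via the direct-sum decomposition, the GKH statement on each prime summand, and the duality/separation argument for the $s$ generators.

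One small imprecision worth flagging: for $n \ge 3$ the connecting arcs at \emph{different} junctions need not represent the same element of $Col^{red}(D)$ --- Lemma \ref{nonprimelemma}(a) only equates the two arcs at a single junction --- so you cannot literally make all of them $0$ by one choice of base arc. This does not damage your argument: decomposing iteratively (split off $D_1$, then $D_2$, \dots), at each stage the relevant connecting arc is the base, and an arc lying in $D_i$ still lands in the $i$-th summand with the other coordinates zero. Your two cases (same summand via Theorem \ref{mainlemma}; different summands via non-vanishing of the $i$-th coordinate) then go through verbatim, and in fact this also handles the pairs in which one of the two arcs is itself a connecting arc at a different junction, which the theorem does not exclude.
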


\begin{proof}

    This result follows from Theorems \ref{generatorlemma} and \ref{mainlemma}, and Lemma \ref{nonprimelemma}.
    
\end{proof}

\begin{remark}

    Theorem \ref{connectedsumnonprime}  was formulated for connected sums of diagrams. However, from William W. Menasco's result (see \cite{Men,Hos}), it follows that if an alternating diagram represents the connected sum of alternating links, then it is already a connected sum of diagrams.
    
\end{remark}

\begin{example}

Let $D$ be an alternating diagram of the square knot, that is $D= \overline{3}_1 \ \# \ 3_1$, with reduced crossing matrix $C(D)$ (see Figure \ref{FoxSquareknot}). Then
  $Col^{red}(\overline{3}_1 \ \# \ 3_1)= 
  \mathbb Z_3 \oplus \mathbb Z_3$. Observe that columns 3 and 5 of $L_3(D)$ modulo $3$ (Figure \ref{matricesSquareKnot}) distinguish all pairs of arcs except the ones connecting $\overline 3_1$ with $3_1.$ Also, the third row (corresponding to the third crossing in the chosen ordering and, therefore, to the third arc) has all zero entries. That is, the third arc  cannot be distinguished from the base arc. 

\begin{figure}[H]
	\begin{subfigure}{.4\textwidth}
$ C(D) = 
\begin{pmatrix*}[r]
2 &  -1 &  0 &  0 &  0\\  -1 &  2 &  -1 &  0 &  0\\  -1 &  -1 &  2 &  0 &  0\\  0 &  0 &  -1 & 2 &  
-1 \\  0 &  0 &  0 &  -1 &  2
    \end{pmatrix*}$
	\end{subfigure} 
\begin{subfigure}{.3\textwidth}
\centering
$$\vcenter{\hbox{
\begin{overpic}[scale = 1.7]{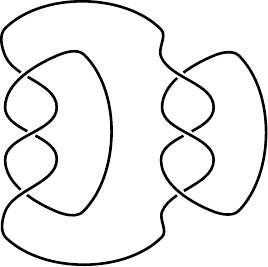}

\put(55, 60){$(1,0)$}
\put(29, 52){$(2,0)$}
\put(37, 135){$(0,0)$}
\put(133, 65){$(0,1)$}
\put(105, 54){$(0,2)$}
\put(30, 5){$(0,0)$}

\end{overpic} }}$$	

 \end{subfigure}
 \caption{The reduced crossing matrix for the square knot (on the left). The square knot $\overline{3}_1 \ \# \ 3_1$ with two Fox $3$-colorings distinguishing every pair of arcs (on the right).} \label{FoxSquareknot}
\end{figure}

\begin{figure}[h]
$ L_3(D) =  3 C^{-1}(D) = 
\begin{pmatrix}
3 &  2 &  1 &  0 &  0\\  3 &  4 &  2 &  0 &  0\\  3 &  3 &  3 &  0 &  0\\  2 &  2 &  2 &  2 &  1\\
 1 &  1 &  1 &  1 &  2
 \end{pmatrix} \ \ \ \ \ 
L_3(D) \ \text{mod 3}=
\begin{pmatrix}
0 &  2 &  1 &  0 &  0\\  0 &  1 &  2 &  0 &  0\\  0 &  0 &  0 &  0 &  0\\  2 &  2 &  2 &  2 &  1\\
 1 &  1 &  1 &  1 &  2
\end{pmatrix} $
\caption{Matrices $L_3(D)$ and $L_3(D)$ modulo $3$ for the square knot.}\label{matricesSquareKnot}

\end{figure}

\end{example}

\section{Examples of Fox colorings}\label{exfoxsection}

In this section we study examples of alternating link diagrams and their Fox colorings. For the structure of the group $Col^{red}(D)=H_{1}(M_{D}^{(2)}, \mathbb{Z})$ for knots up to 10 crossings, see Appendix C in \cite{BZ}.

\begin{example}

Kauffman and Harary 
 showed that the knot $7_7$ is a counterexample to their conjecture for a knot with non-prime determinant \cite{HK}. We have, $det \ ( 7_7)= 21$ and $Col^{red}(7_7)=\mathbb{Z}_{21}$.\footnote{It was noticed in \cite{KLa} that the Kauffman-Harary conjecture holds for any rational (2-bridge) knot without  restrictions on the determinant of the knot.  However, as they note, the formulation of the conjecture needs to be changed from ``every non-trivial Fox $D$-coloring" to ``there exists a Fox $D$-coloring." See Corollary \ref{Coro}.} See Figure \ref{7_7 figure} for a Fox $21$-coloring distinguishing all arcs.

\begin{figure}[h]
$L(7_7)=\begin{pmatrix}
   24 & 20 & 12 & 10 & 16 & 11 \\
 12 & 24 & 6 & 12 & 15 & 9 \\
 15 & 16 & 18 & 8 & 17 & 13 \\
 6 & 5 & 3 & 13 & 4 & 8 \\
 18 & 22 & 9 & 11 & 26 & 10 \\
 12 & 10 & 6 & 5 & 8 & 16 \\     
\end{pmatrix} \ \ \ \ \ 
L(7_7) \ \text{mod} \ 21=\begin{pmatrix}
   3 & 20 & 12 & 10 & 16 & 11 \\
 12 & 3 & 6 & 12 & 15 & 9 \\
 15 & 16 & 18 & 8 & 17 & 13 \\
 6 & 5 & 3 & 13 & 4 & 8 \\
 18 & 1 & 9 & 11 & 5 & 10 \\
 12 & 10 & 6 & 5 & 8 & 16 \\     
\end{pmatrix} $
\caption{Matrices $L(7_7)$ and $L(7_7)$ modulo $21$. Some non-trivial Fox $21$-colorings of $7_7$ do not distinguish all arcs; for example, columns 1 or 3 of $L$ modulo $21$. However, columns $2$, $4$, $5$, and $6$ distinguish all arcs.}

\end{figure}

\begin{figure}[h]
$$\vcenter{\hbox{
\begin{overpic}[scale = 0.8]{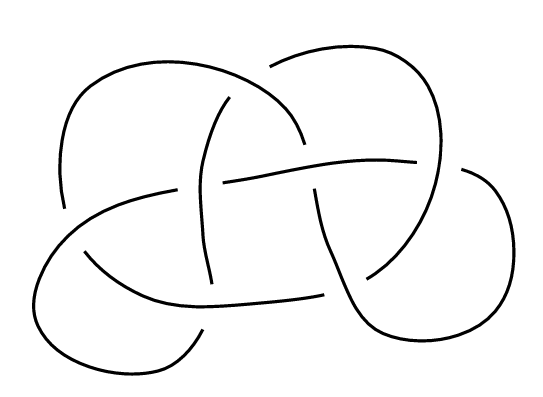}

\put(170, 130){$1$}
\put(142, 100){$2$}
\put(53, 138){$4$}
\put(75, 100){$7$}
\put(2, 20){$12$}
\put(70, 40){$20$}
\put(195, 45){$0$}

\end{overpic} }}$$
\caption{The knot $7_7$ with a Fox $21$-coloring which distinguishes all arcs.}\label{7_7 figure}
\end{figure}

\end{example}

\begin{example}

Consider the family of links obtained by closing the braids $(\sigma_1\sigma_2^{-1})^{n}$. These links are sometimes called Turk's head links and can also be obtained by drawing the Tait diagrams of the wheel graphs $W_n$. The closed formula for the determinant of the $D(W_n)$ is given in \cite{Prz-Goeritz}. Examples for $n=5$ and $n=6$ are drawn in Figure \ref{W5andW6} and their reduced groups of Fox colorings are as follows:

\begin{itemize}

    \item [(a)] For $n=5$, $D(W_5)$ is $10_{123}$ in Rolfsen's table \cite{Rol}. $Col^{red}(D(W_5)) = \mathbb Z_{11} \oplus \mathbb Z_{11} $.
    
    \item [(b)]  For $n=6$, $D(W_6)$ is the link $12^{3}_{474}$ in Thistlethwaite's tables \cite{Prz-Goeritz, Thi}. 
$Col^{red}(D(W_6)) = \mathbb Z_{40} \oplus \mathbb Z_{8}$. 

\end{itemize}

\begin{figure}[ht]
	\centering
	\includegraphics[width=1\linewidth]{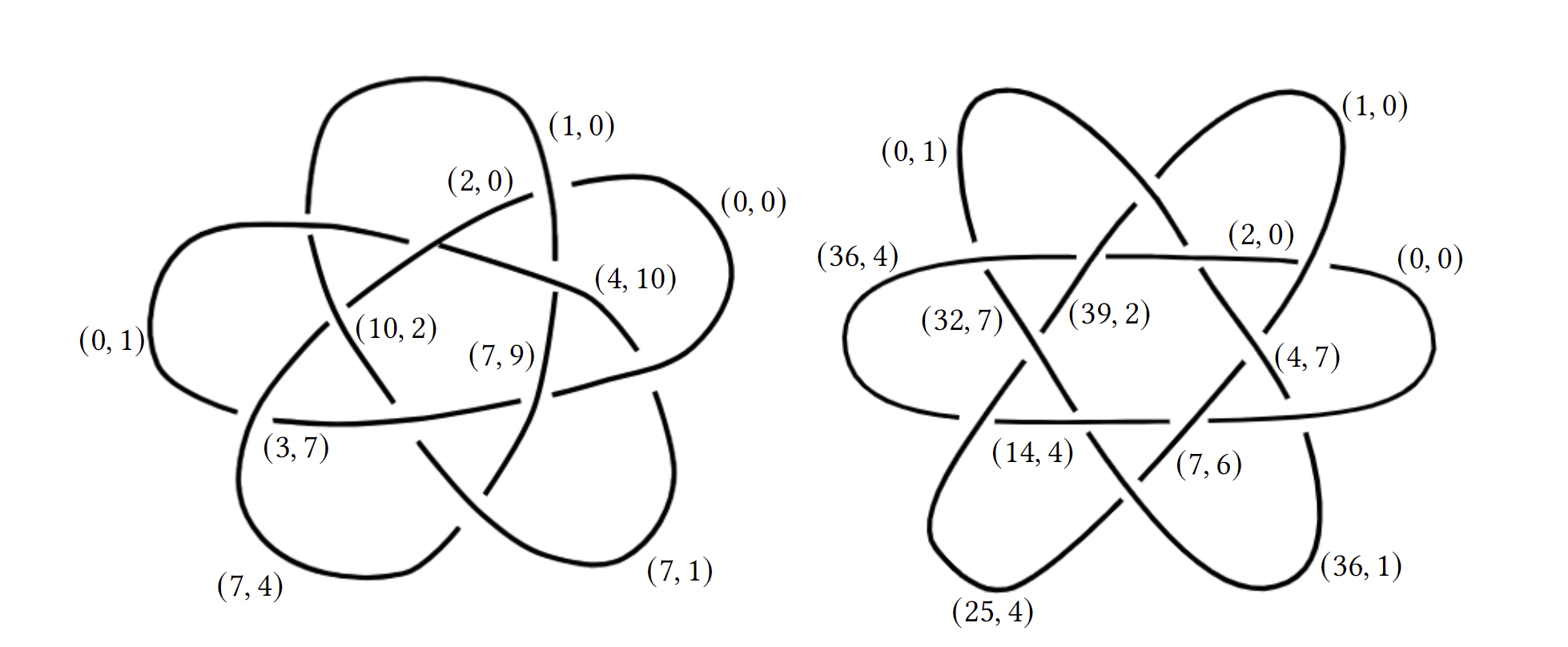}
	\caption{The knot $D(W_5)$ with two Fox colorings distinguishing all arcs (on the left), and the link $D(W_6)$ with two Fox colorings distinguishing all arcs (on the right).}
	\label{W5andW6}
\end{figure}

\end{example}

\begin{example}

The group $Col^{red}$ for pretzel links is given in Proposition $7$ in \cite{APS} and its generalization to Montesinos links is given in Proposition $8$ in \cite{APS}. Here we show two examples together with their coloring matrices modulo $n_1$.

\begin{itemize}

\item[(a)] Let $P(3,3,3,3,3)$ be a pretzel knot with $15$ crossings. Its group  $Col^{red}$ is equal to $\mathbb{Z}_{15} \oplus \mathbb{Z}_3 \oplus \mathbb{Z}_3 \oplus \mathbb{Z}_3$. See its coloring matrix, $L(P(3,3,3,3,3))$ modulo $15$ in Figure \ref{coloringpretzel1}.

\item[(b)]  Let $P(3,3,3,6)$ be a pretzel knot with $15$ crossings. Its group $Col^{red}$ is equal to $\mathbb{Z}_{21} \oplus \mathbb{Z}_3 \oplus \mathbb{Z}_3$. See its coloring matrix, $L(P(3,3,3,6))$ modulo $21$ in Figure \ref{coloringpretzel2}.

\end{itemize}

\end{example}   

\begin{figure}
$L(P(3,3,3,3,3))\ mod \ 15=\left(
\begin{array}{cccccccccccccc}
 5 & 1 & 1 & 1 & 12 & 12 & 7 & 8 & 3 & 3 & 14 & 14 & 14 & 10 \\
 10 & 5 & 10 & 10 & 10 & 10 & 5 & 10 & 0 & 10 & 10 & 10 & 10 & 10 \\
 10 & 1 & 11 & 1 & 12 & 12 & 7 & 8 & 13 & 3 & 4 & 14 & 14 & 10 \\
 10 & 12 & 12 & 7 & 14 & 14 & 9 & 6 & 11 & 11 & 13 & 3 & 3 & 10 \\
 10 & 1 & 1 & 1 & 7 & 12 & 7 & 8 & 13 & 13 & 4 & 4 & 14 & 10 \\
 10 & 12 & 12 & 12 & 14 & 9 & 9 & 6 & 11 & 11 & 13 & 13 & 3 & 0 \\
 10 & 8 & 8 & 8 & 6 & 6 & 11 & 4 & 9 & 9 & 7 & 7 & 7 & 5 \\
 5 & 7 & 7 & 7 & 9 & 9 & 4 & 11 & 6 & 6 & 8 & 8 & 8 & 10 \\
 0 & 3 & 13 & 13 & 11 & 11 & 6 & 9 & 9 & 14 & 12 & 12 & 12 & 10 \\
 10 & 14 & 4 & 4 & 13 & 13 & 8 & 7 & 12 & 7 & 1 & 1 & 1 & 10 \\
 10 & 3 & 3 & 13 & 11 & 11 & 6 & 9 & 14 & 14 & 7 & 12 & 12 & 10 \\
 10 & 14 & 14 & 4 & 3 & 13 & 8 & 7 & 12 & 12 & 1 & 11 & 1 & 10 \\
 10 & 10 & 10 & 10 & 10 & 0 & 10 & 5 & 10 & 10 & 10 & 10 & 5 & 10 \\
 10 & 14 & 14 & 14 & 3 & 3 & 8 & 7 & 12 & 12 & 1 & 1 & 1 & 5 \\
\end{array}
\right)$
\caption{$L(P(3,3,3,3,3))$ modulo $15$. The colorings given by the columns 3, 4, and 10 distinguish all arcs.}\label{coloringpretzel1}
\end{figure}

\begin{figure}
$L(P(3,3,3,6)) \ mod \ 21=\left(
\begin{array}{cccccccccccccc}
 9 & 5 & 1 & 10 & 11 & 18 & 18 & 5 & 7 & 3 & 20 & 1 & 1 & 14 \\
 1 & 6 & 11 & 12 & 9 & 16 & 16 & 20 & 14 & 19 & 3 & 18 & 18 & 14 \\
 14 & 7 & 0 & 14 & 7 & 14 & 14 & 14 & 0 & 14 & 7 & 14 & 14 & 14 \\
 6 & 8 & 10 & 16 & 5 & 12 & 12 & 8 & 7 & 9 & 11 & 10 & 10 & 14 \\
 15 & 13 & 11 & 5 & 16 & 9 & 9 & 13 & 14 & 12 & 10 & 11 & 11 & 7 \\
 13 & 15 & 17 & 9 & 12 & 12 & 19 & 15 & 14 & 16 & 18 & 17 & 3 & 0 \\
 11 & 17 & 2 & 13 & 8 & 15 & 8 & 17 & 14 & 20 & 5 & 2 & 16 & 14 \\
 13 & 15 & 17 & 9 & 12 & 19 & 19 & 8 & 14 & 16 & 18 & 3 & 3 & 14 \\
 5 & 16 & 6 & 11 & 10 & 17 & 17 & 2 & 0 & 11 & 1 & 20 & 20 & 14 \\
 18 & 17 & 16 & 13 & 8 & 15 & 15 & 17 & 7 & 6 & 5 & 16 & 16 & 14 \\
 10 & 18 & 5 & 15 & 6 & 13 & 13 & 11 & 14 & 1 & 9 & 12 & 12 & 14 \\
 12 & 16 & 20 & 11 & 10 & 17 & 3 & 2 & 14 & 18 & 1 & 13 & 20 & 14 \\
 14 & 14 & 14 & 7 & 14 & 0 & 14 & 14 & 14 & 14 & 14 & 14 & 7 & 14 \\
 12 & 16 & 20 & 11 & 10 & 3 & 3 & 16 & 14 & 18 & 1 & 20 & 20 & 7 \\
\end{array}
\right)$
\caption{$L(P(3,3,3,6))$ modulo $21$. The colorings given by columns 1 and 6 distinguish all arcs.}\label{coloringpretzel2}
\end{figure} 

\section{Odds and ends}\label{odds}

\subsection{Pseudo colorings}

An important tool in our proof of Theorem \ref{mainlemma} is the idea of pseudo colorings. In \cite{MS} and in this paper, it is shown that no pseudo colorings exist for reduced, prime, alternating link diagrams. However, the existence of pseudo colorings can be used to see how far a diagram is from being an alternating link diagram. In this section, we briefly explore this concept. In \cite{MS}, Proposition 3.2 depends on the fact that for reduced alternating diagrams the rows of the crossing matrix add to zero. This does not hold for non-alternating diagrams, as we illustrate in the following examples.

\begin{definition}\label{pseudodef}

Let $D$ be a link diagram and $\epsilon \in \{-1,+1\}$. Following Mattman and Solis \cite{MS}, we define an $\boldsymbol{\epsilon}$\textbf{-pseudo coloring} of $D$ as colorings of the arcs of $D$ such that, at all but two crossings the Fox coloring convention $2b - a - c = 0$ is satisfied. We denote the other two crossings by $c_{+1}$ and $c_{\epsilon},$ where the coloring conventions are $2b - a - c = +1$ and $2b - a - c = \epsilon,$ respectively. To obtain the pseudo colorings as defined in \cite{MS}, put $\epsilon=-1$.

\end{definition}

For an alternating link diagram $D$, our convention was to order crossings first and then, the set of arcs inherits the order of the set of crossings. Compare Definition \ref{CrossingMatrixRelation}. The reason for such a choice is that $C'(\overline{D})$ is the same as $C'(D)^{T}$. This does not work for non-alternating link diagrams. 

\ 

In general, we can arbitrarily order crossings and arcs.  In Figure \ref{orderarcs} we give an example of ordering crossings and arcs for the knot $8_{19}$. We first choose a base point and an orientation (shown by an arrow on the left-hand side of Figure \ref{orderarcs}). Starting at this base point, we move along the knot and order crossings. Next, arcs can be ordered arbitrarily with the base arc always being the last one. In Figure \ref{orderarcs} the first coordinate gives the number of the crossing and the second one gives the number of the arc.

\begin{figure}[ht]
	\centering
	\begin{subfigure}{.5\textwidth}
		$$\vcenter{\hbox{
\begin{overpic}[scale = 1.7]{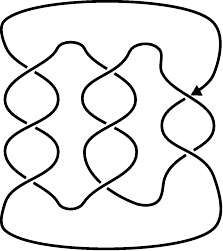}
\put(103, 72){$1,5$}
\put(103, 45){$2,1$}
\put(25, 30){$3,7$}
\put(25, 58){$4,4$}
\put(25, 85){$5,6$}
\put(62, 85){$6,3$}
\put(65, 58){$7,8$}
\put(65, 31){$8,2$}

\end{overpic} }}$$		
	\end{subfigure}%
	\begin{subfigure}{.5\textwidth}		$$\vcenter{\hbox{
\begin{overpic}[scale = 1.7]{8_19aspretzel}

\put(70, 105){$0$}
\put(70, 13){$-1$}
\put(27, 27){$0$}
\put(-3, 40){$0$}
\put(-3, 75){$0$}
\put(65, 124){$0$}
\put(69, 70){$0$}
\put(110, 31){$0$}
\put(47, 20){$c_{+1}$}
\put(103, 45){$c_{+1}$}

\end{overpic} }}$$

 \end{subfigure}
 \caption{The torus knot $T(3,4)$ ($8_{19}$ in Rolfsen's table \cite{Rol}) depicted as the pretzel knot $P(3,3,-2)$ showing ordering of crossings and arcs (on the left). On the right, there is pseudo coloring given by the second column; compare Remark \ref{remarkpseudo}.} \label{orderarcs}
\end{figure}

\

In the following example, we analyze non-split, non-prime alternating diagrams.

\begin{example}\label{Ex52}

Let $D = D_1 \ \# \ D_2$ be a non-split, non-prime alternating link diagram. $D$ always has a $-1$-pseudo coloring using color $1$ on $D_1$ and color $0$ on $D_2$. We illustrate this idea for the square knot $\overline{3}_1 \ \# \ 3_1$ in Figure \ref{PseudoSquare}.

\begin{figure}[h]
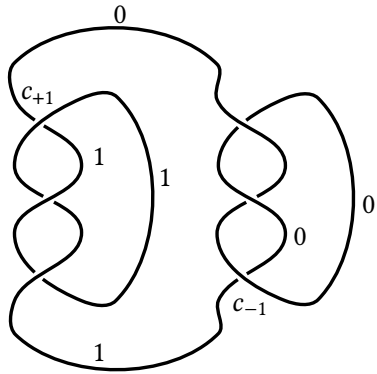

\centering
$$\vcenter{\hbox{
\begin{overpic}[scale = 1.7]{squareknot}

\put(57, 70){$1$}
\put(32, 78){$1$}
\put(40, 132){$0$}
\put(5,103){$c_{+1}$}
\put(108, 48){$0$}
\put(134, 60){$0$}
\put(32, 4){$1$}
\put(85,23){$c_{-1}$}

\end{overpic} }}$$
	\caption{$-1$-pseudo coloring of the square knot with the $+1$-crossing denoted by $c_{
+1}$ and the $-1$-crossing denoted by $c_{
-1}$.}\label{PseudoSquare}
\end{figure}  
\end{example}

 On the other hand, non-alternating link diagrams often have $-1$-pseudo colorings and $+1$-pseudo colorings. See Examples \ref{Ex53} and \ref{Ex819}. If the determinant of a knot with diagram $D$ is equal to $1$, we have $L(D) = C^{-1}(D)$ and every column of $C^{-1}(D)$ colors the first $n-1$ arcs of the diagram. Then for a complete $\epsilon$-pseudo coloring of $D$, we color the last (base) arc $a_n$ by color $0$. 
 \begin{example}\label{Ex53}
 
Consider the braid word $\sigma^{3}_{2}\sigma^{}_{1}\sigma^{-1}_{3}\sigma^{-2}_{2}\sigma^{}_{1}\sigma^{-1}_{2}\sigma^{}_{1}\sigma^{-1}_{3}$ whose closure is the Conway knot. The determinant of this knot is $1$ and its crossing matrix $C'(D)$ is given in Figure \ref{CMCK}. The $+1$-pseudo coloring given by column $4$ and the $-1$-pseudo coloring given by column $1$ in the matrix shown in Figure \ref{pseudoConwayKnot} are illustrated in Figure \ref{CKpic} on the left and on the right, respectively. 

\end{example}

 \begin{figure}[ht]
	\centering
	\begin{subfigure}{.5\textwidth}
		$$\vcenter{\hbox{
\begin{overpic}[scale = 0.9]{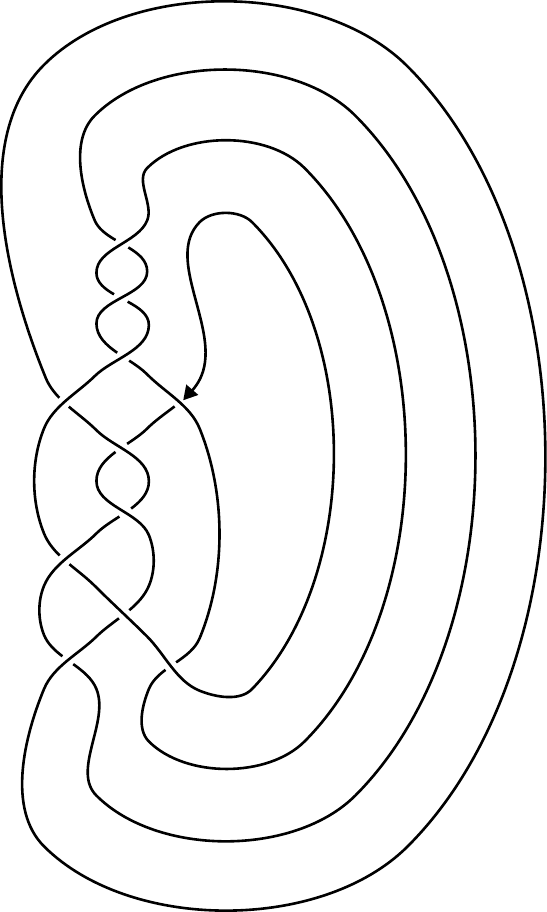}

\put(55, 150){$2$}
\put(35.5, 118){$-8$}
\put(40, 108){$-5$}
\put(42, 86){$-2$}
\put(5, 222){$5$}
\put(1, 110){$0$}
\put(20, 210){$9$}
\put(18, 165){$6$}
\put(18, 149){$3$}
\put(60, 100){$-3$}
\put(3, 72){$1$}
\put(50, 60){$c_{+1}$}
\put(14, 76){$c_{+1}$}

\end{overpic} }}$$
\label{CKpicA}  
\end{subfigure}%
\begin{subfigure}{.5\textwidth}		$$\vcenter{\hbox{
\begin{overpic}[scale = 0.9]{CK}

\put(55, 150){$6$}
\put(35.5, 118){$-33$}
\put(38.5, 108){$-21$}
\put(42, 86){$-9$}
\put(5, 225){$21$}
\put(1, 110){$0$}
\put(20, 212){$39$}
\put(13, 165){$26$}
\put(13, 149){$13$}
\put(60, 100){$-13$}
\put(3, 72){$3$}
\put(50, 60){$c_{-1}$}
\put(55, 130){$c_{+1}$}

\end{overpic} }}$$
	\label{CKpicB}
 \end{subfigure}
 \caption{The Conway knot with $+1$-pseudo coloring (on the left) and with $-1$-pseudo coloring (on the right). The last crossing $c_{+1}$ in the left figure changes to $c_{-1}$ in the right figure.}\label{CKpic} 
\end{figure}
 
\begin{figure}[ht]
$$C'(D)=\left(
\begin{array}{ccccccccccc}
 -1 & -1 & 0 & 0 & 0 & 0 & 0 & 0 & 2 & 0 & 0 \\
 0 & -1 & -1 & 0 & 2 & 0 & 0 & 0 & 0 & 0 & 0\\
 0 & 0 & 2 & 0 & -1 & -1 & 0 & 0 & 0 & 0 & 0\\
 2 & 0 & -1 & -1 & 0 & 0 & 0 & 0 & 0 & 0 & 0\\
 0 & 0 & 0 & 2 & 0 & -1 & -1 & 0 & 0 & 0 & 0 \\
 0 & 0 & 0 & -1 & -1 & 0 & 0 & 0 & 0 & 0 & 2 \\
 -1 & 0 & 0 & 0 & 0 & 2 & 0 & 0 & 0 & 0 & -1 \\
 0 & 0 & 0 & 0 & 0 & 0 & -1 & -1 & 0 & 2& 0 \\
 0 & 0 & 0 & 0 & 0 & 0 & 0 & 2 & 0 & -1 & -1 \\
 0 & 0 & 0 & 0 & 0 & 0 & 0 & -1 & -1 & 0 & 2 \\
 2& 0 & 0&0 & 0 & 0&0 &0 &-1 & -1 &0 \\
\end{array}
\right)$$
\caption{The crossing matrix of the Conway knot. Notice that the rows of the crossing matrix satisfy the linear equation $R_{1}-R_2-R_3-R_4-R_5-R_6-R_7+R_8+R_9+R_{10}+R_{11}=0.$}\label{CMCK}

\end{figure}
 
\begin{figure}[h]
$$L(D)= \begin{pNiceMatrix}[r,first-row,last-row,first-col]
     \ \ \ \ \          &  \text{ \ } & \text{ \ }  & \text{ \ }  & \text{ \ }  & \text{ \ }   & \text{ \ }  & \text{ \ }  & \text{ \ }  & \text{ \ }  & \text{ \ }   \\     
\ \ \ \ \ \ \   & \textbf{6} & -6 & -2 & \textbf{2} & -4 & -10 & -3 & 4 & 8 & 12  \ \ \\
 \ \ \ \ \ \    & \textbf{-33} & 32 & 12 & \textbf{-8} & 22 & 52 & 17 & -22 & -44 & -66 \ \ \\ 
 \ \ \ \ \ \  & \textbf{-9} & 9 & 4 & \textbf{-2} & 6 & 14 & 5 & -6 & -12 & -18 \ \ \\
 \ \ \ \ \ \  & \textbf{21} & -21 & -8 & \textbf{5} & -14 & -34 & -11 & 14 & 28 & 42 \ \ \\
\ \ \ \ \ \  &\textbf{-21} & 21 & 8 & \textbf{-5} & 14 & 33 & 11 & -14 & -28 & -42 \\
 \ \ \ \ \ \  &\textbf{3} & -3 & -1 & \textbf{1} & -2 & -5 & -1 & 2 & 4 & 6 \\
\ \ \ \ \ \ & \textbf{39} & -39 & -15 & \textbf{9} & -27 & -63 & -21 & 26 & 52 & 78 \\
\ \ \ \ \ \ & \textbf{13} & -13 & -5 & \textbf{3} & -9 & -21 & -7 & 9 & 18 & 26 \\
\ \ \ \ \ \ & \textbf{-13} & 13 & 5 & \textbf{-3} & 9 & 21 & 7 & -9 & -18 & -27 \\
 \ \ \ \ \ \ &\textbf{26} & -26 & -10 & \textbf{6} & -18 & -42 & -14 & 18 & 35 & 52 \\ 
  & \textbf{0} & 0 & 0 & \textbf{0} & 0 & 0 & 0 & 0 & 0  & 0 \ \ \\
\end{pNiceMatrix}$$
\caption{Coloring matrix for the Conway knot. The last row of zeroes correspond to the coloring of the base arc.}\label{pseudoConwayKnot}
\end{figure}

\begin{example}\label{Ex819}

Consider the torus knot $T(3,4)$ with diagram $D$ and crossings and arcs ordered as illustrated in Figure \ref{orderarcs} . Its crossing matrix is shown in Figure \ref{CprimeMatrix819}. Three columns of $C^{-1}(D)$ (shown in Figure \ref{CMatrix819}) are integral and they yield $\epsilon$-pseudo colorings. Column 5 gives a $-1$-pseudo coloring (shown on the right in Figure \ref{8_19aspretzel}) and columns 1 and 2 give -1-pseudo colorings. The $+1$-pseudo coloring corresponding to column $1$ is shown on the left of Figure \ref{8_19aspretzel}.

\begin{figure}[ht]
	\centering	\includegraphics[width=0.85\linewidth]{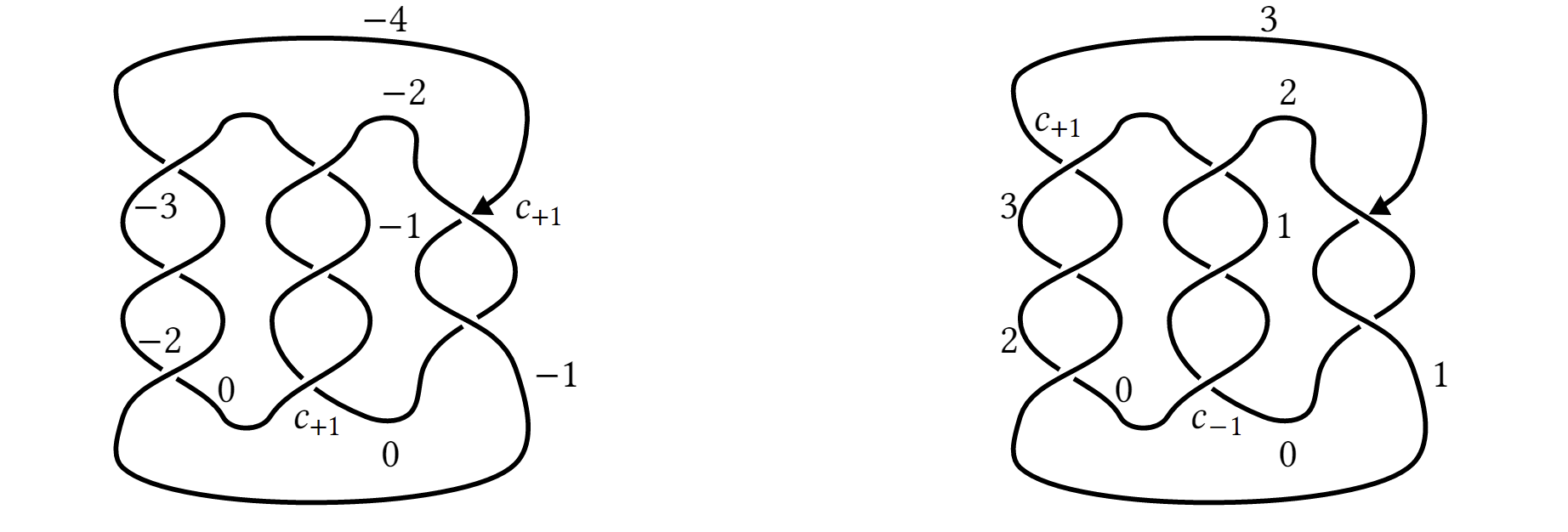}
	\caption{The torus knot $T(3,4)$ ($8_{19}$ in the Rolfsen's table \cite{Rol}) depicted as the pretzel knot $P(3,3,-2)$. } \label{8_19aspretzel}
\end{figure}

\begin{figure}[h]
$C'(D)=\left(
\begin{array}{rrrrrrrr}
2& 0 &0 &0 & -1 & -1 & 0&0 \\
 -1 & -1 & 0 &0 &2 &0 &0 &0  \\
 0& 0 & 0 &0  &2 &0 &-1&-1  \\
0&0  &0  &-1 &-1 &0 &2 &0 \\
0& 0 &0  & 2&0 &-1 &-1 &0  \\
 2& 0 & -1  &-1 &0 & 0&0 &0  \\
 -1 & 0 & 2 &0 & 0& 0&0  & -1  \\
0& -1  & -1 &0 &0 &0 &0 & 2 \\ 
\end{array}
\right)$
\caption{The crossing matrix $C'(P(3,3,-2))$. The rows satisfy the linear relation 
$R_1 +R_2 - R_3 -R_4 -R_5 -R_6 -R_7 - R_8 = 0$.}
\label{CprimeMatrix819}
\end{figure}

\setlength{\tabcolsep}{20pt}
\renewcommand{\arraystretch}{1.5}
\begin{figure}[ht]
$\displaystyle C^{-1}(D)=\displaystyle \left(
\begin{array}{rrrrrrr}
 -2 & 0 & -\frac{2}{3} & \frac{2}{3} & 2 & \frac{10}{3} & \frac{5}{3} \\
 0 & -1 & \frac{4}{3} & \frac{2}{3} & 0 & -\frac{2}{3} & -\frac{1}{3} \\
 -1 & 0 & -\frac{1}{3} & \frac{1}{3} & 1 & \frac{5}{3} & \frac{4}{3} \\
 -3 & 0 & -1 & 1 & 3 & 4 & 2 \\
 -1 & 0 & \frac{1}{3} & \frac{2}{3} & 1 & \frac{4}{3} & \frac{2}{3} \\
 -4 & 0 & -\frac{5}{3} & \frac{2}{3} & 3 & \frac{16}{3} & \frac{8}{3} \\
 -2 & 0 & -\frac{1}{3} & \frac{4}{3} & 2 & \frac{8}{3} & \frac{4}{3} \\
\end{array}
\right)$
\caption{$C^{-1}(D)$ corresponding to $T(3,4)$ with three integral columns.}
\label{CMatrix819}
\end{figure}
\end{example}

Non-alternating link diagrams always have $\epsilon$-pseudo colorings, as we describe in the following remark.

\begin{remark}\label{remarkpseudo} 
Let $D$ be a non-alternating link diagram.

\begin{enumerate}

\item[(1)] Every integral column of $C^{-1}(D)$ leads to some $\epsilon$-pseudo coloring.

\item[(2)] $D$ has an  $\epsilon$-pseudo coloring. This follows from the fact that every non-alternating diagram has a tunnel of length at least two. Now, we can color $D$ by coloring one of the arcs of the tunnel by color $-1$ and all other arcs by color $0$ to get the $+1$-pseudo coloring. An example of such a coloring is shown on the right-hand side of Figure \ref{orderarcs}.
\end{enumerate}

\end{remark}

\subsection{Future directions}
The Kauffman-Harary conjecture was extended to the case of virtual knots by Mathew Williamson \cite{Wil} and proved by Zhiyun Cheng \cite{Che}. A natural question is to ask whether the conjecture in \cite{APS} holds for virtual links whose determinants are not prime. Another path of further research is to look for a natural generalization to non-alternating diagrams using a set theoretic Yang-Baxter operator or a general Yang-Baxter operator. 

\ 

An interesting prospect is to approach the generalized Kauffman-Harary conjecture from the perspective of incompressible surfaces in the double branched cover $M^{(2)}_L$ of $S^3$ branched along $L$. This was outlined in \cite{APS} with the hope of proving the GKH conjecture. Now that the GKH conjecture is proved, we can proceed in the opposite direction and analyze incompressible surfaces in $M^{(2)}_L$.

\section*{Acknowledgements}

The first author acknowledges the support of Dr. Max Rössler, the Walter Haefner Foundation, and the ETH Zürich Foundation. The third author acknowledges the support of the National Science Foundation through Grant DMS-2212736. The fourth author was supported by the American Mathematical Society and the Simons Foundation through the AMS-Simons Travel Grant. The fifth author was partially supported by the Simons Collaboration Grant 637794.

\end{document}